\newcommand{\slf}{\mathfrak{sl}}
\newcommand{\mbZ}{\mathbb{Z}}
\newcommand{\B}{\mathcal{B}}
\newcommand{\Z}{\mathbb{Z}}
\newcommand{\one}{\mathbf{1}}
\newcommand{\ot}{\otimes}
\newcommand{\inv}{^{-1}}
\newcommand{\ol}{\overline}
\newcommand{\tS}{\tilde{S}}
\newcommand{\op}{\oplus}
\newcommand{\hS}{\hat{S}}
\title[Classification of super-modular categories by rank]{Classification of super-modular categories by rank}
\date{\today}
\author[P. Bruillard]{Paul Bruillard}
\email{Paul.Bruillard@pnnl.gov}
\address{Pacific Northwest National Laboratory, 902 Battelle Boulevard,
Richland, WA U.S.A}
\author[C. Galindo]{C\'{e}sar Galindo}
\email{cn.galindo1116@uniandes.edu.co}
\address{Departamento de Matem\'aticas, Universidad de los Andes, Bogot\'a, Colombia.}
\author[S.-H. Ng]{Siu-Hung Ng}
\email{rng@math.lsu.edu}
\address{Department of Mathematics, Louisiana State University, Baton Rouge, LA
    U.S.A.}
\author[J. Plavnik]{Julia Y. Plavnik}
\email{julia@math.tamu.edu}
\address{Department of Mathematics,
    Texas A\&M University,
    College Station, TX
    U.S.A.}
\author[E. Rowell]{Eric C. Rowell}
\email{rowell@math.tamu.edu}
\address{Department of Mathematics,
    Texas A\&M University,
    College Station, TX
    U.S.A.}
\author[Z. Wang]{Zhenghan Wang}
\email{zhenghwa@microsoft.com}
\address{Microsoft Research Station Q and Department of Mathematics,
    University of California,
    Santa Barbara, CA
    U.S.A.}
\begin{document}
\begin{abstract}
We pursue a classification of low-rank super-modular categories parallel to that of modular categories.
We classify all super-modular categories up to rank=$6$, and spin modular categories up to rank=$11$.  In particular, we show that, up to fusion rules, there is exactly one non-split super-modular category of rank $2,4$ and $6$, namely $PSU(2)_{4k+2}$ for $k=0,1$ and $2$. This classification is facilitated by adapting and extending well-known constraints from modular categories to super-modular categories, such as  Verlinde and Frobenius-Schur indicator formulae.
\end{abstract}
\thanks{The results in this paper were mostly obtained while all six authors were at the American Institute of Mathematics, participating in a SQuaRE.  We would like to thank that institution for their hospitality and encouragement. Galindo was partially supported by the Ciencias B\'asicas funds from vicerrectoria de investigaciones de la Universidad de los Andes, Ng by NSF grant DMS-1501179, Plavnik by CONICET, ANPCyT and Secyt-UNC, Rowell and Plavnik by NSF grant DMS-1410144, and Wang by NSF grant DMS-1411212. The research described in this paper was, in part, conducted under the Laboratory Directed Research and Development Program at PNNL, a multi-program national laboratory operated by Battelle for the U.S. Department of Energy. \textit{PNNL Information Release:} PNNL-SA-126378.
}
\maketitle

\section{Introduction}
\normalem

Elementary particles such as  electrons and photons are either fermions or bosons. But elementary excitations of topological phases of matter behave like exotic particles called anyons.  When the underlying particles of a topological phase of matter are bosons, the emergent anyon system is well modelled by a unitary modular category \cite{RSW}.  But most real topological phases of matter such as the fractional quantum Hall liquids are materials of electrons.  While a substantial part of the theory of anyons can be developed using unitary modular categories by bosonization,  to fully capture topological properties of anyons in fermion systems require super-modular categories \cite{16fold}.

Super-modular categories are unitary premodular categories with M\"uger center equivalent  to the unitary symmetric fusion category $\sVec$ of super-vector spaces. Both mathematically and physically, it is interesting to pursue a theory of super-modular categories parallel to modular categories and study problems such as rank-finiteness and classification.  Moreover, the general structure of unitary premodular categories is reduced to that of modular or super-modular categories via de-equivariantization \cite{BNRW,16fold}, which provides another motivation to study super-modular categories.

Unitary modular categories sit inside (split) super-modular categories as $\mathcal{C}\subset \mathcal{C}\boxtimes \sVec$.  However, the  degeneracy of the $S$-matrix for super-modular categories complicates their classification: many standard results for modular categories either fail or require significant modification.  Consequently fundamental problems such as rank-finiteness for super-modular categories are still open.

In this paper we pursue a classification of low-rank super-modular categories parallel to \cite{RSW}.   We classify all super-modular categories up to rank=$6$, and spin modular categories up to rank=$11$.  In particular, we show that, up to fusion rules, there is exactly one non-split super-modular category of rank $2,4$ and $6$, namely $PSU(2)_{4k+2}$ for $k=0,1$ and $2$.  We also show that rank-finiteness for unitary premodular categories would be a consequence of the minimal modular extension conjecture for super-modular categories \cite[Conjecture 3.14]{16fold}.

\section{Super-modular Categories and Fermionic Quotients}

Recall \cite{BK} that a \textbf{premodular} category $\mcB$ is a braided fusion category with a chosen spherical pivotal structure.  The isomorphism classes of simple objects will be labeled by $\Pi:=\Pi_\mcB=\{0,\ldots, r\}$ where $\1\cong X_0$ is the monoidal unit object and $X_i$ will be a chosen representative of the class $i$.  The unnormalized $S$-matrix will be denoted $\tS$ to distinguish it from the normalized version: $S=\frac{\tS}{D}$ where $D^2=\dim(\B)$ with $D>0$.  Notice that the categorical dimensions of the simple objects $X_i$ are $d_i:=\tS_{0,i}$, which are strictly positive for unitary categories, and $\sum_{i\in\Pi}d_i^2=D^2$.  The twists of the simple objects are $\theta_i$.

The \textbf{M\"uger centralizer} of a subcategory $\mcD\subset\mcB$ of a premodular  category $\mcB$ is the subcategory $C_\mcB(\mcD)$ generated by the simple objects $W\in\mcB$ such that ${\tilde{S}_{W,X}=d_Wd_X}$ for all $X\in\mcD$ (see \cite[Corollary 2.14]{M2}), and the \textbf{M\"uger center} of $\mcB$ is $C_\mcB(\mcB)=\mcB^\prime$.  A \textbf{modular} category $\mcC$ has trivial M\"uger center, \textit{i.e.}  $\mcC^\prime\cong\Vec$ whereas a \textbf{symmetric category} $\mcS$ has $\mcS^\prime=\mcS$.   Clearly $\mcB^\prime$ is itself a symmetric fusion category for any premodular category. The category of super-vector spaces is the fusion category of $\mbZ_2$-graded finite-dimensional vector spaces equipped with the braiding 
given by $c_{V_i, V_j} = (-1)^{ij} \tau$, for any homogeneous vector spaces $V_i, V_j$ of degree $i$ and $j$ respectively, where $\tau$ is the usual flip map of vector spaces.
 The symmetric fusion category of super-vector spaces has a unique spherical structure so that the dimensions are strictly positive, and we denote this premodular category $\sVec$ and its unnormalized $S$-matrix is 
 $\tS_{\sVec}=\begin{pmatrix} 1 & 1\\ 1& 1\end{pmatrix}$.  A non-trivial simple object in $\sVec$ is called a \textbf{fermion} and we typically denote a representative by $f$.  It is easy to see that we must have $\theta_f=-1$ for this unitary spherical structure.    
A unitary premodular category $\mcB$ with $\mcB^\prime\cong\sVec$ is called \textbf{super-modular} \cite{16fold}.
A super modular category $\mcB$ is called \textbf{split} if there is a modular category $\mcC$ so that $\mcB\cong \mcC\boxtimes \sVec$, and otherwise it is \textbf{non-split}.  For example, $\sVec$ itself is split since we may take $\mcC\cong\Vec$ the trivial modular category of vector spaces.

\begin{remark} \begin{enumerate}
\item More generally one defines the M\"uger center $\mcB^\prime$ of a braided tensor category $\mcB$ as the full subcategory generated by simple objects $X$ such that $c_{Y,X}c_{X,Y}=Id_{X\ot Y}$ for all objects $Y$.  Then a braided fusion category $\mcB$ with $\mcB^\prime$ equivalent to the symmetric category of super-vector spaces is called \textit{slightly degenerate} in \cite{ENO2}.  As we restrict our attention to unitary premodular categories this is equivalent to super-modular.  
\item Let $\sVec^{-}$ denote the other (non-unitary) spherical symmetric fusion category obtained from the category of super-vector spaces.  We do not know of any  premodular categories $\mcB$ with $\mcB^\prime\cong\sVec^{-}$ that does not split as $\mcC\boxtimes\sVec^{-}$.  On the other hand, it is easy to construct non-unitary premodular categories $\mcB$ with $\mcB^\prime\cong\sVec$ that do not split (via Galois conjugation, for example).  
\item As we do not use the Hermitian structure on our categories, all of our results hold under the (possibly weaker) assumption that the objects have positive dimensions.  We will always assume the dimensions are postive unless otherwise noted.
\end{enumerate}
\end{remark}

\begin{definition} A \textbf{fusion rule of rank $r+1$} is the collection of matrices $\mcN:=\{N_i:0\leq i\leq r\}$ so that $(N_i)_{k,j}=N_{i,j}^k$ correspond to a unital based ring (with unit $0$) in the sense of {\cite[Definition 2.2]{O1}}.  In particular there is an involution ${}^*$ on the labels $0\leq i\leq r$ so that $N_{i^*}=(N_i)^T$.
A fusion rule is {\bf commutative} if $N_iN_j=N_jN_i$ for any $i,j$, in which case each $N_i$ is a normal matrix.  A \textbf{ mock $S$-matrix} $S=(S_{ij})$ of a commutative fusion rule is a unitary simultaneous diagonalizer of $\mcN$.
\end{definition}
Two fusion categories $\mcD$ and $\mcC$ are called \textbf{Gothendieck equivalent} if their fusion rules are isomorphic, i.e. if there is an isomorphism of Grothendieck semirings $K_0(\mcD)\cong K_0(\mcC)$.

\subsection{Properties}

Fusion rules have been studied in other contexts such as table algebras \cite{Blau} and
association schemes \cite{Bannai}.  Indeed, the following result can be proved by a careful application of results in \cite[Theorem 4.1]{Bannai}: 

\begin{theorem} \label{t:1}
Let $\mcN$ be any commutative fusion rule.
\begin{enumerate}
    \item {Let $S$ be a simultaneous diagonalizer of $\mcN$. Then a complex square matrix $S'$ is a simultaneous diagonalizer of $\mcN$ if and only if  $S'=SD'P$ for some permutation matrix $P$ and a nonsingular diagonal matrix $D'$.}

\item {If $S$ is a  symmetric mock $S$-matrix of $\mcN$, then it satisfies the Verlinde rule}:
$$N_{ab}^c=\sum_{j}\frac{S_{aj}S_{bj}\bar{S}_{cj}}{S_{0j}}.$$\end{enumerate}
\end{theorem}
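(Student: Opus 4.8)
The plan is to treat $\mcN = \{N_0,\dots,N_r\}$ as a commutative algebra of normal matrices and exploit the classical structure theory of simultaneously diagonalizable families, which is exactly what underlies Bannai--Ito's treatment of (abelian) association schemes. For part (i), first I would observe that because $\mcN$ is a commutative fusion rule, the $N_i$ pairwise commute and each is normal (as noted in the Definition), so there exists at least one unitary $S$ simultaneously diagonalizing all of them; fix such an $S$. The key fact I would use is that the common eigenspaces of $\mcN$ are all one-dimensional: this follows because the based ring is unital and the regular representation $i \mapsto N_i$ is faithful, so the characters (columns of eigenvalues) separate the simple objects, forcing each joint eigenspace to be a line. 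Granting that, if $S'$ is any (not necessarily unitary, just nonsingular) simultaneous diagonalizer, then $S^{-1}S'$ must conjugate each diagonal matrix $S^{-1}N_iS$ into another diagonal matrix; a matrix that conjugates a family of diagonal matrices with jointly distinct eigenvalue-tuples into diagonal matrices is necessarily a permutation times a nonsingular diagonal matrix. Writing $S^{-1}S' = PD''$ and rearranging gives $S' = S P D'' = (SP)(P^{-1}D''P)$, and since $P^{-1}D''P$ is again a nonsingular diagonal matrix, relabeling yields $S' = S D' P$ as claimed. Conversely, it is immediate that $SD'P$ diagonalizes $\mcN$ whenever $S$ does, since right-multiplication by $D'$ rescales columns (harmless for eigenvectors) and by $P$ permutes columns.

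For part (ii), suppose now $S$ is a \emph{symmetric} mock $S$-matrix, so $S = S^T$ and $S$ is unitary, hence $S^{-1} = \bar S$. The $j$-th column $s_j$ of $S$ is a common eigenvector: $N_a s_j = \lambda_a^{(j)} s_j$, and pairing with $e_0$ (the unit label) gives $\lambda_a^{(j)} = (N_a s_j)_0 / (s_j)_0 = S_{aj}/S_{0j}$, using that $(N_a)_{0,k} = N_{a,k}^0 = \delta_{k,a^*}$ and symmetry of $S$ to identify $(N_a s_j)_0 = \sum_k \delta_{k,a^*} S_{kj} = S_{a^*j} = S_{aj}$ — here I would need the small lemma that $S_{a^*j} = \bar S_{aj}$ together with the reality forced by self-consistency, or more simply just track the eigenvalue directly. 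Then $N_a N_b = \sum_j \lambda_a^{(j)}\lambda_b^{(j)} s_j s_j^{*}$ by the spectral decomposition (unitarity of $S$), and reading off the $(c,0)$ entry — recalling $N_{ab}^c = (N_aN_b)_{c,0}$ and $(N_b)_{c,0} = N_{b,0}^c = \delta_{c,b}$ so the product structure is the one governing fusion — yields
\[
N_{ab}^c \;=\; \sum_j \frac{S_{aj}}{S_{0j}}\cdot\frac{S_{bj}}{S_{0j}}\cdot S_{cj}\overline{S_{0j}} \;=\; \sum_j \frac{S_{aj}S_{bj}\overline{S_{cj}}}{S_{0j}},
\]
after using $\overline{S_{0j}} = \bar S_{0j}$, unitarity $\sum_j S_{0j}\bar S_{0j}=1$, and the identity $\overline{S_{cj}} = S_{c^* j}$ to match the stated form; the bookkeeping of which index carries the bar is where symmetry of $S$ does the real work.

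The main obstacle I anticipate is not the algebra but the justification that the joint eigenspaces are one-dimensional and that the eigenvalue-tuples $(\lambda_a^{(j)})_a$ are pairwise distinct across $j$ — without this, $S^{-1}S'$ need only be block-diagonal-times-permutation rather than diagonal-times-permutation, and part (i) fails. This is precisely the content one imports from \cite[Theorem 4.1]{Bannai}: a commutative unital based ring is, as an algebra over $\mbC$, split semisimple of dimension $r+1$, so it has exactly $r+1$ inequivalent one-dimensional representations, and these are realized by the columns of any simultaneous diagonalizer. I would state this as a short preliminary lemma (or cite it directly) and then the rest of the argument is the routine linear-algebra manipulation sketched above. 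A secondary, purely cosmetic subtlety is pinning down the complex-conjugation conventions so that the Verlinde formula comes out with the bar on $S_{cj}$ and not elsewhere; this is handled once and for all by the relation between $*$ on labels, transposition $N_{i^*} = N_i^T$, and the symmetry $S = S^T$.
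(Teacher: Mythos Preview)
Your argument for part (i) is correct and is essentially the paper's: both reduce to the observation that the $r+1$ characters of the commutative based ring are pairwise distinct, so joint eigenspaces are lines, whence any two diagonalizers differ by a monomial matrix. (Minor slip: from $S^{-1}S'=PD''$ you wrote $S'=SPD''=(SP)(P^{-1}D''P)$, but $(SP)(P^{-1}D''P)=SD''P$; the conclusion $S'=SD'P$ is still correct since a monomial matrix can be factored either way.)

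Part (ii) is where your route genuinely diverges from the paper's, and there is a gap. Reading the $0$-th component of $N_a s_j=\lambda_a^{(j)}s_j$ gives $\lambda_a^{(j)}=S_{a^*j}/S_{0j}$, not $S_{aj}/S_{0j}$: you used $(N_a)_{0,k}=N_{a,k}^0=\delta_{k,a^*}$ correctly, but then ``symmetry of $S$'' does not turn $S_{a^*j}$ into $S_{aj}$. Your proposed patch $S_{a^*j}=\bar S_{aj}$ is not available for an abstract symmetric mock $S$-matrix (it would force $S_{0j}\in\mathbb{R}$, which need not hold), and even granting it your displayed final equality $\sum_j \tfrac{S_{aj}S_{bj}S_{cj}\overline{S_{0j}}}{S_{0j}^2}=\sum_j \tfrac{S_{aj}S_{bj}\overline{S_{cj}}}{S_{0j}}$ does not follow. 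So this is not merely cosmetic bookkeeping.

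The paper avoids this entirely: it invokes Bannai's explicit diagonalizer $S'$, for which the eigenvalue identity $\lambda^{(i)}_{jj}=S'_{ij}/S'_{0j}$ is part of the cited theorem, and then uses part (i) to write $S'Q=SD'$ and observe that the ratio $S'_{i\sigma(j)}/S'_{0\sigma(j)}=S_{ij}/S_{0j}$ is invariant under the diagonal rescaling $D'$. That invariance-of-ratios step is the substance you are missing. Your direct approach \emph{can} be repaired: compute the eigenvalue from the left eigenvector equation $\bar S N_a=\beta^{(a)}\bar S$ instead (set the \emph{second} index to $0$, using $N_{a,0}^k=\delta_{a,k}$), which gives $\lambda_a^{(j)}=\overline{S_{aj}/S_{0j}}$; then spectral decomposition yields $N_{ab}^c=\sum_j \overline{S_{aj}}\,\overline{S_{bj}}\,S_{cj}/\overline{S_{0j}}$, and conjugating (since $N_{ab}^c\in\mathbb{Z}$) produces the stated Verlinde formula.
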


\begin{proof} 
{(i) Let $\mcU$ be the unital based ring defined by $\mcN$ with basis $\{x_0, \dots, x_r\}$.
  Suppose $S$ is a diagonalizer of $\mcN$.  Then $S\inv N_i S = D^{(i)}$ for all $i$, where $D^{(i)}$ is a diagonal matrix.
  Then the map $\phi_j(x_i) = D^{(i)}_{jj}$, $i=0,\dots r$, defines a character of $\mcU$ for each $j$, and $\{\phi_j \mid j =0, \dots, r\}$ is the set of all irreducible characters of $\mcU$. If $S'$ is a simultaneous diagonalizer of $\mcN$, then there exists a permutation matrix $P$ such that $P {S'}\inv N_i S'P\inv =D^{(i)}=S\inv N_i S$ for all $i$.  Therefore,
  \begin{equation} \label{eq:commutativity}
     S\inv S'P\inv D^{(i)} = S\inv N_i S'P\inv =  D^{(i)} S\inv S'P\inv\,.
  \end{equation}
  Suppose $k, l$ are distinct elements of $\{0, \dots, r\}$. There exist $i$ such that $\phi_k(x_i) \ne \phi_j(x_i)$. Hence, $D^{(i)}_{jj} \ne D^{(i)}_{kk}$. The equation \eqref{eq:commutativity} implies $S\inv S'P\inv=D'$ is an invertible diagonal matrix. The converse of the statement is an immediate and direct verification.
  
  (ii) Let $S$ be a symmetric mock $S$-matrix of $\mcN$ and $S'$ be a simultaneous diagonalizer of $\mcN$ given in \cite[Theorem 4.1]{Bannai}. Then there exist a permutation $\s$ on $\{0,\dots, r\}$ and a nonsingular diagonal matrix $D'$ such that 
  \begin{equation} \label{eq:diagonal}
  S'Q = S D' 
  \end{equation}
  where $Q$ is the permutation matrix associated with $\s$. By \cite[Theorem 4.1]{Bannai},
  $$
  S^{\prime\inv} N_i S^\prime = \lambda^{(i)}
  $$
  for all $i$, where $\lambda^{(i)}_{jk}=\frac{S'_{ij}}{S'_{0j}}\delta_{jk}$. Suppose $S\inv N_i S = \b^{(i)}$ for each $i$, where $\b^{(i)}$ is a diagonal matrix.  Then,
  $$
  \b^{(i)} = (D')\inv S\inv N_i SD'= Q\inv {S'}\inv N_i S' Q = Q\inv \lambda^{(i)} Q = \left(\frac{S'_{i\s(j)}}{S'_{0\s(j)}}\delta_{jk}\right)_{jk}=
  \left(\frac{S_{ij}}{S_{0j}}\delta_{jk}\right)_{jk}
  $$
  The last equality follows from \eqref{eq:diagonal}.
 Thus, we find
 $$
 N_i  = S \b^{(i)} S\inv \,.
 $$
 Since $S$ is a symmetric unitary matrix, the equation implies the Verlinde rule is satisfied by $S$. 
  }
  \end{proof}

In particular for modular categories the usual (normalized) $S$-matrix is a mock $S$-matrix.  This implies:
\begin{corollary}
Suppose two modular categories $\mcC$ and $\mcD$ are Grothendieck equivalent.  Then their $S$-matrices are equal via a permutation and rescaling of columns {in the sense of Theorem \ref{t:1}(i).}
\end{corollary}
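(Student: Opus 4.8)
The plan is to reduce the statement to a direct application of Theorem~\ref{t:1}(i). First I would use the Grothendieck equivalence to identify the two categories at the level of fusion data: an isomorphism $K_0(\mcD)\cong K_0(\mcC)$ of Grothendieck semirings carries the distinguished basis of simple objects of $\mcD$ bijectively onto that of $\mcC$, so after relabeling the simple objects of $\mcD$ accordingly, both $\mcC$ and $\mcD$ have the same label set $\Pi$ and the identical collection of fusion matrices $\mcN=\{N_i\}$. Since $\mcC$ is braided, $\mcN$ is a commutative fusion rule, so Theorem~\ref{t:1} applies to it. The relabeling transports the unnormalized $S$-matrix $\tS^\mcD$, whose rows and columns are canonically indexed by $\Pi$, to a matrix indexed by the same $\Pi$ as $\tS^\mcC$.

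Next I would invoke the fact recorded immediately before the corollary: for a modular category the normalized $S$-matrix is a mock $S$-matrix of its fusion rule, i.e.\ a unitary simultaneous diagonalizer of $\mcN$. (This is precisely the content of the Verlinde formula together with the invertibility of $S$ in the modular case.) Hence both normalized matrices $S^\mcC$ and $S^\mcD$ are simultaneous diagonalizers of $\mcN$.

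Finally, applying Theorem~\ref{t:1}(i) with $S=S^\mcC$ and $S'=S^\mcD$ produces a permutation matrix $P$ and a nonsingular diagonal matrix $D'$ with $S^\mcD=S^\mcC D'P$. Right multiplication by $D'$ rescales the columns and right multiplication by $P$ permutes them, so $S^\mcC$ and $S^\mcD$ coincide up to a permutation and rescaling of columns, which is the assertion of the corollary (the auxiliary relabeling of $\mcD$ used in the first step only contributes a further permutation of the common index set).

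I expect the only delicate point — and the main, if modest, obstacle — is the bookkeeping in the first step: one must ensure that the chosen identification of simple objects is the one which makes \emph{both} $S$-matrices diagonalize the same matrices $\mcN$, so that the remaining discrepancy between them is confined to the columns exactly as in Theorem~\ref{t:1}(i). Once this is arranged, the corollary is a one-line consequence of the preceding results.
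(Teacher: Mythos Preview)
Your proposal is correct and follows exactly the route the paper intends: the corollary is stated immediately after the remark that the normalized $S$-matrix of a modular category is a mock $S$-matrix, and the paper offers no further proof beyond ``This implies,'' so your argument---identify the fusion rules via the Grothendieck equivalence and then apply Theorem~\ref{t:1}(i) to the two mock $S$-matrices---is precisely the intended one, with the bookkeeping made explicit.
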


\subsection{Super Modular Categories and Fermionic Modular Quotients}\label{Mock S-matrix}

Let $\B$ be a super-modular category, with fermion $f$ {and set of isomorphism classes of simple objects $\Pi$}.  Since {the action of $f$ on $\Pi$} is fixed-point-free, we may partition $\Pi$ into two sets $\Pi_0\bigsqcup f\Pi_0$.  We may arrange this (non-canonical) partition so that $0\in\Pi_0$ where $X_0=\one$ and if $X\in\Pi_0$ then so is $X^*$.  Indeed, if $X\in\Pi_0$, $f\ot X\not\cong X^*$ since their twists have opposite signs. We label the simple classes as follows: $0,1,\ldots,r\in\Pi_0$, $f,f\cdot1,\ldots,f\cdot r\in f\Pi_0$ where $X_{f\cdot i}:=f\ot X_i$ and $f\cdot 0=f$.  We will often denote $f\ot X$ by juxtaposition $f X$.
For each object $X_i$ the maximal eigenvalue of the corresponding fusion matrix $N_i$ is denoted $\FPdim(X_i)$ and $\FPdim(\B):=\sum_{i\in\Pi} \FPdim(X_i)^2$.  When $\FPdim(\B)=\dim(\B)$ we say that $\B$ is \textbf{pseudo-unitary}.
\begin{lemma}\label{lemma: factorization}
Let $\mcB$ be a (not necessarily pseudo-unitary) premodular category and $\mathcal{D}\subset \mcB$ a modular subcategory. If $\operatorname{FPdim}(\mathcal{D})=\frac{\operatorname{FPdim}(\mathcal{B})}{\operatorname{FPdim}(\mathcal{B}^\prime)}$ then $\mcB\cong \mathcal{D}\boxtimes \mcB^\prime$ as premodular categories. If $\mcB$ is unitary, by restriction $\mathcal{D}$ and $\mathcal{B}^\prime$ are also unitary and the equivalence  $\mcB\cong \mathcal{D}\boxtimes \mcB^\prime$ is of unitary premodular categories. 
\end{lemma}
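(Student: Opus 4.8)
The statement is a factorization criterion: under a Frobenius–Perron dimension count, a modular subcategory $\mathcal{D}$ splits off as a Deligne product. The standard tool here is Müger's theorem: if $\mathcal{D}\subseteq\mathcal{B}$ is a modular (hence nondegenerate) fusion subcategory, then $\mathcal{B}\cong\mathcal{D}\boxtimes C_{\mathcal{B}}(\mathcal{D})$ as braided fusion categories, where $C_{\mathcal{B}}(\mathcal{D})$ is the Müger centralizer. So the plan is: invoke this decomposition, then identify $C_{\mathcal{B}}(\mathcal{D})$ with $\mathcal{B}'$ using the dimension hypothesis, and finally upgrade the braided equivalence to one of premodular (and, in the unitary case, unitary premodular) categories by tracking the spherical/pivotal structures.

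**Key steps.** First I would record that $\mathcal{D}$ being modular means $\mathcal{D}\cap C_{\mathcal{B}}(\mathcal{D})=\Vec$ and, by \cite{M2}, $\mathcal{B}\cong\mathcal{D}\boxtimes C_{\mathcal{B}}(\mathcal{D})$ as braided fusion categories; in particular $\FPdim(\mathcal{B})=\FPdim(\mathcal{D})\cdot\FPdim(C_{\mathcal{B}}(\mathcal{D}))$. Second, I would combine this with the hypothesis $\FPdim(\mathcal{D})=\FPdim(\mathcal{B})/\FPdim(\mathcal{B}')$ to deduce $\FPdim(C_{\mathcal{B}}(\mathcal{D}))=\FPdim(\mathcal{B}')$. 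Third, since $\mathcal{B}'$ is the Müger center of $\mathcal{B}$, it centralizes everything, in particular $\mathcal{D}$, so $\mathcal{B}'\subseteq C_{\mathcal{B}}(\mathcal{D})$; with equal Frobenius–Perron dimensions this forces $\mathcal{B}'=C_{\mathcal{B}}(\mathcal{D})$. Hence $\mathcal{B}\cong\mathcal{D}\boxtimes\mathcal{B}'$ as braided fusion categories. Fourth, I would check this equivalence respects the spherical pivotal structures: the pivotal/spherical structure on a Deligne product is the product structure, and since $\mathcal{D}$ and $\mathcal{B}'$ inherit their spherical structures by restriction from $\mathcal{B}$, the braided equivalence is in fact an equivalence of premodular categories. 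Fifth, for the unitary statement: restriction of the Hermitian (unitary) structure to a fusion subcategory is again unitary, $\Vec$-exactness of the Deligne product preserves unitarity, and the braided equivalence above can be chosen unitary, giving the equivalence of unitary premodular categories.

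**Main obstacle.** The conceptual content is entirely in the first step — Müger's decomposition theorem for a nondegenerate fusion subcategory — which I am allowed to cite. The genuinely fiddly part is the bookkeeping in the last two steps: verifying that the spherical structure on $\mathcal{D}\boxtimes\mathcal{B}'$ built from the restricted structures coincides (under Müger's equivalence) with the one on $\mathcal{B}$, and that in the unitary case the equivalence can be taken to be unitary rather than merely braided. This is where one must be careful that "modular subcategory" is understood with its inherited spherical structure, and that the dimension functions ($\FPdim$ versus categorical dimension $\dim$) are used consistently — the lemma is stated for $\FPdim$ precisely so that pseudo-unitarity is not needed. I expect no real surprises here, only the need to phrase the structure-preservation cleanly; the one point worth stating explicitly is why $\mathcal{B}'\subseteq C_{\mathcal{B}}(\mathcal{D})$ (immediate from $\mathcal{B}'$ centralizing all of $\mathcal{B}$) so that the dimension count can be promoted to an equality of subcategories rather than merely of dimensions.
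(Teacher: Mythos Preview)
Your proposal is correct and follows essentially the same line as the paper: invoke M\"uger's factorization result for the modular subcategory $\mathcal{D}$, then use the inclusion $\mathcal{B}'\subseteq C_{\mathcal{B}}(\mathcal{D})$ together with the $\FPdim$ hypothesis to identify $C_{\mathcal{B}}(\mathcal{D})=\mathcal{B}'$, and finish by noting that unitarity restricts to subcategories. The only cosmetic difference is that the paper cites \cite[Theorem~4.1]{M2} as giving an embedding $\mathcal{D}\boxtimes C_{\mathcal{B}}(\mathcal{D})\hookrightarrow\mathcal{B}$ and then uses the dimension count for surjectivity, whereas you invoke the full equivalence $\mathcal{B}\cong\mathcal{D}\boxtimes C_{\mathcal{B}}(\mathcal{D})$ at the outset; both are valid.
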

\begin{proof}
By \cite[Theorem 4.1]{M2} we see that $\mcD\boxtimes C_\mcB(\mcD)$ is equivalent as a  premodular category to a fusion subcategory of $\B$. Since {$\operatorname{FPdim}(C_\mcB(\mcD))\geq \operatorname{FPdim}(\mcB^\prime)$} and $\operatorname{FPdim}(\mathcal{D})=\frac{\operatorname{FPdim}(\mathcal{B})}{\operatorname{FPdim}(\mathcal{B}^\prime)}$, we have that $\operatorname{FPdim}(\mathcal{B})=\operatorname{FPdim}(\mathcal{D}\boxtimes \mcB^\prime)$. Hence, $C_\mcB(\mcD)\cong \mcB^\prime$ and $\mcB$ is equivalent to $\mathcal{D}\boxtimes \mcB^\prime$. Since unitarity passes to subcategories, the proof is complete.
\end{proof}

\begin{theorem}\label{graded_splits}
Let $\mcB=\mcB_0\oplus \mcB_1$ be a $\mathbb{Z}_2$-graded super-modular category. If $f\in \mcB_1$ then  $\mcB\cong\mcB_0\boxtimes\sVec$ as unitary premodular categories with $\mcB_0$ modular.
\end{theorem}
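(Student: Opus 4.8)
The plan is to exhibit the trivial component $\mcB_0$ as a modular subcategory of the correct Frobenius--Perron dimension, and then invoke Lemma~\ref{lemma: factorization}. First I would note that since $f\in\mcB_1$ the grading is faithful (the support of a grading is a subgroup of $\mbZ_2$, and here it is not trivial), so $\mcB_0$ is a fusion subcategory with $\FPdim(\mcB_0)=\tfrac12\FPdim(\mcB)$; since $\FPdim(\mcB^\prime)=\FPdim(\sVec)=2$, this is exactly $\FPdim(\mcB)/\FPdim(\mcB^\prime)$. Moreover $\mcB_0$ inherits the braiding and the spherical structure of $\mcB$, hence is a unitary premodular category, and $\langle\mcB_0,f\rangle=\mcB$ because $f\ot\mcB_0=\mcB_1$ (using that $f$ is invertible).

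The key step is to compute the M\"uger centralizer $C_\mcB(\mcB_0)$. Because $\mcB^\prime\cong\sVec$ is the M\"uger center, the fermion $f$ is transparent in $\mcB$, so $C_\mcB(\langle f\rangle)=\mcB$. Using the standard facts that $C_\mcB(-)$ depends only on a generating set of objects of a subcategory and sends a join of subcategories to the intersection of their centralizers,
\[
C_\mcB(\mcB_0)=C_\mcB(\mcB_0)\cap C_\mcB(\langle f\rangle)=C_\mcB(\langle\mcB_0,f\rangle)=C_\mcB(\mcB)=\mcB^\prime\cong\sVec.
\]
(Equivalently, one checks directly from the hexagon axioms that any object centralizing $\mcB_0$ also centralizes $f\ot\mcB_0=\mcB_1$, since $f$ is transparent and the double braiding is multiplicative in the tensor variable; then it centralizes all of $\mcB$.)

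It then follows that $\mcB_0$ is modular: its M\"uger center is $C_{\mcB_0}(\mcB_0)=\mcB_0\cap C_\mcB(\mcB_0)=\mcB_0\cap\sVec=\Vec$, the last equality because $f\in\mcB_1$ is not an object of $\mcB_0$, and a premodular category with trivial M\"uger center is modular. Now Lemma~\ref{lemma: factorization} applies with $\mcD=\mcB_0$: indeed $\mcB_0\subset\mcB$ is modular and $\FPdim(\mcB_0)=\FPdim(\mcB)/\FPdim(\mcB^\prime)$, so $\mcB\cong\mcB_0\boxtimes\mcB^\prime\cong\mcB_0\boxtimes\sVec$; and since $\mcB$ is unitary, so are the factors and the equivalence, with $\mcB_0$ modular, which is the assertion.

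I expect the point requiring the most care to be the centralizer identity $C_\mcB(\mcB_0)\cong\sVec$: the inclusion $\sVec\subseteq C_\mcB(\mcB_0)$ is automatic since $\sVec=\mcB^\prime$ centralizes everything, but the reverse inclusion is exactly where all the hypotheses enter --- we need $f$ to be transparent \emph{and} to lie in the nontrivial graded component, so that adjoining it to $\mcB_0$ recovers all of $\mcB$ without enlarging the centralizer.
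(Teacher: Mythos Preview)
Your proof is correct and follows essentially the same strategy as the paper: show that $\mcB_0$ is modular by proving its M\"uger center is trivial (anything centralizing $\mcB_0$ also centralizes $f\ot\mcB_0=\mcB_1$, hence lies in $\mcB^\prime=\sVec$, which meets $\mcB_0$ trivially since $f\in\mcB_1$), and then apply Lemma~\ref{lemma: factorization}. The only cosmetic difference is that the paper carries out the centralizer step via the $S$-matrix identity $S_{f\ot Y,W}=d_W^{-1}S_{f,W}S_{Y,W}=d_{f\ot Y}d_W$ from \cite[Lemma~2.4]{M2}, whereas you phrase it as the lattice identity $C_\mcB(\mcB_0)\cap C_\mcB(\langle f\rangle)=C_\mcB(\langle\mcB_0,f\rangle)$; both encode the multiplicativity of the double braiding.
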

\begin{proof}
Let ${Y}\in \mcB_0$ and $W\in C_{\mcB_0}(\mcB_0)$ be simple objects. Using \cite[Lemma 2.4]{M2}, we have 
\begin{align*}
S_{f\otimes Y,W}&=\frac{1}{d_W}S_{f,W}S_{Y,W}\\
&=\frac{d_fd_Wd_Yd_W}{d_W}\\
&=d_{f\otimes Y}d_{W}.
\end{align*} Hence $C_{\mcB_0}(\mcB_0)\subset \mcB^\prime$. Since $f\notin \mcB_0$, $C_{\mcB_0}(\mcB_0)=\operatorname{Vec}$, that is,  $\mcB_0$ is modular. Applying Lemma \ref{lemma: factorization} we yield the result.
\end{proof}


For each $i,j,k\in\Pi_0$ we define $$\hat{N}_{i,j}^k=\dim\Hom(X_i\ot X_j,X_k)+\dim\Hom(X_i\ot X_j,f\ot X_k)=N_{i,j}^k+N_{i,j}^{f\cdot k}.$$  These $\hat{N}_{i,j}^k$ are called the \textbf{naive fusion rules} for the fermionic quotient, and  they define {a unital based ring $\hat{\mcU}_\B$ of rank $r+1$ by  Proposition \ref{mock S-matrix properties}.  By \cite[Theorem 3.9]{16fold}, the unnormalized $S$-matrix of $\B$ has the form: $\tS=\begin{pmatrix} \hat{S} & \hat{S}\\ \hat{S} & \hat{S}
\end{pmatrix}$ where $\hS$ is symmetric and invertible.  We will see that $\hS$ is projectively unitary: $\hS\ol{\hS}=\frac{D^2}{2}I$ and will be called the \textbf{$S$-matrix of the fermionic quotient}.  This designation is justified by the following:

\begin{prop}\label{mock S-matrix properties} Let $\Pi_0, \hat{S}$ and $\hat{\mcN}=\{\hat{N}_i:0\leq i\leq r\}$ be as above for a given {super-modular} category $\B$.  Then:
\begin{enumerate}
    \item[(a)] $\hat{S}$ is symmetric and ${\hat S}\inv= \frac{2}{D^2} \ol{{\hat S}}$.
    \item[(b)] {$\hat{\mcN}$ is a commutative fusion rule.}
    \item[(c)] {Let $\{x_i \mid i \in \Pi_0\}$ denote the basis of $\hat{\mcU}_\B$.} Then the functions $\varphi_i(x_j):=\hat{S}_{ij}/\hat{S}_{0i}$ for $0\leq i\leq r$ form a set of orthogonal characters of the fusion algebra $\hat{\mathcal{U}}_\B$, and so $\hat{S}$ simultaneously diagonalizes the matrices $\hat{N}_i$.
    \item[(d)] $\displaystyle \hat{N}_{ij}^k=\sum_{{m \in \Pi_0}}\frac{\frac{2}{D^2}\hat{S}_{im}\hat{S}_{jm}\overline{\hat{S}_{km}}}{\hat{S}_{0m}}$.
    
\end{enumerate}

\end{prop}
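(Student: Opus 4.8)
The plan is to treat (b) first as a purely ring-theoretic statement, then deduce (a) from the block structure of $\tS$ together with the standard $S$-matrix identities for premodular categories, and finally obtain (c) and (d) from (a).

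For (b), I realize $\hat{\mcU}_\B$ as the quotient ring $K_0(\B)/\langle [f]-[\one]\rangle$. Since $f$ is invertible with $f^{\ot 2}\cong\one$ and $f\ot(-)$ acts freely on $\Pi$, the ideal $\langle[f]-[\one]\rangle$ is spanned as an abelian group by $\{[f\ot X_i]-[X_i]:i\in\Pi_0\}$, which are $\mbZ$-linearly independent; hence the quotient is $\mbZ$-free of rank $r+1$ with basis $\{x_i:i\in\Pi_0\}$ (the images of the $[X_i]$), and $x_ix_j=\sum_{k\in\Pi_0}(N_{ij}^k+N_{ij}^{f\cdot k})x_k=\sum_{k}\hat N_{ij}^k x_k$. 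It is commutative, being a quotient of the commutative ring $K_0(\B)$, and unital with unit $x_0$. Because $\Pi_0$ was chosen stable under $(-)^*$ and $f^*\cong f$, dualization descends to an involution $i\mapsto i^*$ of $\Pi_0$, and Frobenius reciprocity in $K_0(\B)$ together with $f^*\cong f$ (for instance $N_{ij}^{f\cdot k}=N_{i^*,k}^{f\cdot j}$) yields the based-ring axioms $\hat N_{ij}^k=\hat N_{i^*k}^j=\hat N_{kj^*}^i$ and $\hat N_{ij}^0=\delta_{i,j^*}$. Thus $\hat{\mcN}$ is a commutative fusion rule of rank $r+1$ in the sense of \cite{O1}.

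For (a), symmetry of $\hat{S}$ is given by \cite[Theorem 3.9]{16fold}, so it remains to prove $\hat{S}\,\ol{\hat{S}}=\frac{D^2}{2}I$. Here I invoke the standard orthogonality relations for premodular categories (see \cite{M2}): applying the quasi-character identity $\tS_{ik}\tS_{j^*k}=d_k\sum_l N_{i,j^*}^l\tS_{lk}$ and the fact that $\sum_{k}d_k\tS_{lk}$ equals $D^2$ when $X_l\in\B'$ and $0$ otherwise, one obtains $(\tS\,\ol{\tS})_{ij}=\sum_{k}\tS_{ik}\ol{\tS_{jk}}=\sum_k\tS_{ik}\tS_{j^*k}=D^2\sum_{Z\in\{\one,f\}}N_{i,j^*}^Z=D^2(\delta_{ij}+\delta_{i,f\cdot j})$, using $\B'\cong\sVec$. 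For $i,j\in\Pi_0$ one has $\delta_{i,f\cdot j}=0$, while the block form $\tS=\left(\begin{smallmatrix}\hat{S}&\hat{S}\\\hat{S}&\hat{S}\end{smallmatrix}\right)$ gives $(\tS\,\ol{\tS})_{ij}=2(\hat{S}\,\ol{\hat{S}})_{ij}$; comparing, $\hat{S}\,\ol{\hat{S}}=\frac{D^2}{2}I$, i.e.\ $\hat{S}\inv=\frac{2}{D^2}\ol{\hat{S}}$.

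For (c) and (d): first $\hat{S}_{0i}=\tS_{0,i}=d_i>0$, so each $\varphi_i$ is well defined. Using $\hat{S}_{ia}=\tS_{ia}$ and $\tS_{i,f\cdot c}=\tS_{i,c}$ (both from the block form) and $\hat N_{ab}^c=N_{ab}^c+N_{ab}^{f\cdot c}$, the identity $\varphi_i(x_a)\varphi_i(x_b)=\sum_{c\in\Pi_0}\hat N_{ab}^c\varphi_i(x_c)$ reduces to the same quasi-character identity $\tS_{ia}\tS_{ib}=d_i\sum_{c\in\Pi}N_{ab}^c\tS_{ic}$; since also $\varphi_i(x_0)=1$, each $\varphi_i$ is a character of $\hat{\mcU}_\B$. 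The $\varphi_i$ ($i\in\Pi_0$) are pairwise distinct because $\hat{S}$ is invertible, so its rows are linearly independent; hence they are exactly the $r+1$ characters of $\hat{\mcU}_\B$, and orthogonality $\sum_c\varphi_i(x_c)\ol{\varphi_j(x_c)}=\frac{D^2}{2d_i^2}\delta_{ij}$ follows from (a). The character property rewrites as $\hat{S}\hat N_i=\operatorname{diag}_m(\hat{S}_{im}/\hat{S}_{0m})\,\hat{S}$, hence $\hat N_i=\hat{S}\inv\operatorname{diag}_m(\hat{S}_{im}/\hat{S}_{0m})\,\hat{S}$ and $\hat{S}$ simultaneously diagonalizes $\{\hat N_i\}$; reading off the $(k,j)$-entry and substituting $\hat{S}\inv=\frac{2}{D^2}\ol{\hat{S}}$ and the symmetry of $\hat{S}$ gives $\hat N_{ij}^k=\sum_{m\in\Pi_0}\frac{\frac{2}{D^2}\hat{S}_{im}\hat{S}_{jm}\ol{\hat{S}_{km}}}{\hat{S}_{0m}}$, which is (d). The main obstacle is (a): parts (b), (c), (d) are bookkeeping with the $f$-orbits (that $\Pi_0$ is $*$-stable, and that rows and columns of $\tS$ indexed by $X_i$ and $f\ot X_i$ coincide) or formal linear algebra, whereas (a) requires correctly invoking the two classical facts about the unnormalized $S$-matrix of a premodular category—multiplicativity of its rows, and detection of the Müger center by dimension-weighted row sums—with the appearance of $\frac{D^2}{2}$ rather than $D^2$ accounted for by $\dim(\B')=2$.
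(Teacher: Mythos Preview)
Your proof is correct and follows essentially the same path as the paper: both invoke the block form of $\tS$ together with M\"uger's orthogonality and quasi-character identities for premodular categories (\cite{M2}, Lemmas~2.4 and~2.15) to establish (a) and (c), from which (d) follows by the same linear algebra. The only cosmetic differences are that you handle (b) first via an explicit quotient-ring description of $\hat{\mcU}_\B$ (the paper simply defers (b) to (c)) and that you compute $\hat{S}\,\ol{\hat{S}}$ directly rather than passing through $\hat{S}^2=\tfrac{D^2}{2}C$ and then using $\ol{\hat{S}}_{ij}=\hat{S}_{ij^*}$.
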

Note that one interpretation of (c) is that {$\frac{\sqrt{2}}{D}\hat{S}$} is a mock $S$-matrix for the fusion algebra $\hat{\mathcal{U}}_\B$ associated with the fusion rule $\hat{\mathcal{N}}$ of the fermionic quotient of $\B$.
\begin{proof}
It is immediate that $\hat{S}$ is symmetric since  $\tS$ is symmetric.  According to \cite[Lemma 2.15]{M2}, for simple objects  $Y,Z$ we have 
$${\sum_{X \in\Pi} S_{X,Y}S_{X,Z}=N_{Y,Z}^\one + N_{Y,Z}^f} $$
since the simple objects in $\B'$ are $\one$ and $f$. By choosing $j,k\in\Pi_0$ and setting $X=X_j,Y=X_k$ we then have: $$\sum_{i\in\Pi}S_{i,j}S_{i,k}=\frac{2}{D^2}\sum_{i\in\Pi_0}\hat{S}_{j,i}\hat{S}_{i,k}=
N_{j,k}^0+N_{j,k}^f=\hat{N}_{j,k}^0.$$
Now since we have chosen $\Pi_0$ to be closed under ${}^*$ we see that $\hat{N}_{j,k}^0=\delta_{j,k^*}$ and so  $\hat{S}^2=\frac{D^2}{2} C$ where $C$ is the  charge conjugation matrix: $C_{i,j}=\delta_{i,j^*}$.  Thus we have proved (a). Notice that we have $\ol S_{ij} = S_{ij^*}$ for premodular categories since we may embed them in their (modular) Drinfeld center.  Statement (b) can be verified directly, but is also a consequence of (c).  

Statement (c) is a consequence of the fact that the (normalized) columns of the $S$-matrix {$\tilde S$} of any {premodular} category are characters {of its Grothendieck ring}.  Indeed, fixing $i,j,k\in\Pi_0$ we have (\cite[Lemma 2.4(iii)]{M2}):

$${\frac{\tilde S_{i,j} \tilde S_{i,k}}{\tilde S^2_{0,i}}=\sum_{m\in\Pi}N_{j,k}^m\frac{\tilde S_{i,m}}{\tilde S_{0,i}}.}$$
Splitting the right-hand-side into $m\in\Pi_0$ and $fm\in f\Pi_0$ and observing that {$\tS_{i,m}=\tS_{i,f\cdot m}$} we obtain: 
{
\begin{equation}\label{orthog}
   \frac{\hat{S}_{i,j}\hat{S}_{i,k}}{\hat{S}^2_{0,i}}=\sum_{m\in\Pi_0}\hat{N}_{j,k}^m\frac{\hat{S}_{i,m}}{\hat{S}_{0,i}}.
\end{equation}

The equation means that $\{\varphi_i\mid i \in\Pi_0\}$ is a set of irreducible characters of $\hat{\mcU}_\B$. In fact, this is the set of \emph{all} irreducible characters of $\hat{\mcU}_\B$ since {$(\hat{S}_{ij}/\hat{S}_{0i})_{ij}$} is also invertible. Equation \eqref{orthog} also implies that the column vector $(\hat{S}_{i,m})_{m\in\Pi_0}$ is an eigenvector for $\hat{N}_j$ with eigenvalue ${\frac{\hat{S}_{i,j}}{\hat{S}_{0,i}}}$ for all $i, j\in \Pi_0$. Thus, we have the matrix equation  
$$
\hat{N}_i \hat{S} = \hat{S} \lambda^{(i)} 
$$
for all $i \in \Pi_0$, where $\lambda^{(i)}_{jk} = \frac{\hat{S}_{i,j}}{\hat{S}_{0,i}} \delta_{jk}$.
Now (d) follows from this equation and (a).
}
\end{proof}

\begin{lemma}\label{FS-indicator super}
    For $X_{j}$ a simple self-dual object in a {super-modular} category $\B$, we have
    \begin{align*}
      \pm 1&=\n_{2}(X_{j})=\frac{2}{{\dim \mcB}}\Sum_{a,b {\in \Pi_0}}\hat{N}_{a,b}^{j}d_{a}d_{b}\left(\frac{\th_{a}}{\th_{b}}\right)^{2},
    \end{align*}
     where $\hat{N}_{i,j}^k$ are the naive fusion rules for the fermionic quotient.
  \end{lemma}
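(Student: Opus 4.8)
The plan is to establish the super-modular counterpart of Bantay's formula for the second Frobenius--Schur indicator and then repackage it in terms of the fermionic quotient. Recall that for a \emph{modular} category $\mcC$ one has $\n_2(X_k)=\frac{1}{\dim\mcC}\sum_{a,b\in\Pi_\mcC}N_{a,b}^k\,d_ad_b(\th_a/\th_b)^2$. This identity cannot be imported verbatim for a general premodular category, since its usual proof passes through the Verlinde formula, which is false once the $S$-matrix is degenerate (it already fails for symmetric fusion categories such as $\mathrm{Rep}(\Z/3\Z)$). So the substance of the proof is to rerun the Bantay--Ng--Schauenburg derivation for a super-modular $\mcB$, the key point being that the \emph{only} place non-degeneracy is genuinely used is the Verlinde formula, for which the super-modular setting supplies an exact substitute: the $\hat S$-Verlinde formula of Proposition~\ref{mock S-matrix properties}(d), together with the block form $\tS=\begin{pmatrix}\hat S&\hat S\\\hat S&\hat S\end{pmatrix}$ and the relation $\hat S\inv=\frac{2}{D^2}\ol{\hat S}$ from Proposition~\ref{mock S-matrix properties}(a). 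Carrying out that derivation --- expand $\n_2(X_j)$ into a sum of twisted dimension traces, resolve every structure constant using $\hat S$-Verlinde (which collapses the $\tS$-blocks and leaves sums only over $\Pi_0$), and recognize the inner sums as Gauss-type sums --- produces precisely the claimed formula, the prefactor $\tfrac{2}{\dim\mcB}$ being the square of the normalization $\tfrac{\sqrt2}{D}$ that makes $\tfrac{\sqrt2}{D}\hat S$ unitary (cf.\ the remark following Proposition~\ref{mock S-matrix properties}).

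Equivalently, and as a useful consistency check, the stated identity can be matched with the ``naive'' sum over all of $\Pi$. Writing $\Pi=\Pi_0\sqcup f\cdot\Pi_0$ and using $d_{f\cdot a}=d_a$, $\th_{f\cdot a}^2=\th_a^2$ (since $\th_f=-1$), $N_{f\cdot a,\,b}^j=N_{a,\,f\cdot b}^j=N_{a,b}^{f\cdot j}$ and $N_{f\cdot a,\,f\cdot b}^j=N_{a,b}^j$, the four pieces of the double sum combine to give
\begin{align*}
\sum_{a,b\in\Pi}N_{a,b}^j\,d_ad_b\left(\frac{\th_a}{\th_b}\right)^2
&=2\sum_{a,b\in\Pi_0}\bigl(N_{a,b}^j+N_{a,b}^{f\cdot j}\bigr)\,d_ad_b\left(\frac{\th_a}{\th_b}\right)^2\\
&=2\sum_{a,b\in\Pi_0}\hat N_{a,b}^j\,d_ad_b\left(\frac{\th_a}{\th_b}\right)^2,
\end{align*}
so $\frac{1}{\dim\mcB}\sum_{a,b\in\Pi}N_{a,b}^j d_ad_b(\th_a/\th_b)^2$ equals the right-hand side of the Lemma. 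Note that this folding genuinely relies on $\th_f=-1$, which explains why the super-modular indicator is still given by the ``modular-looking'' expression even though the $S$-matrix of $\mcB$ is degenerate. Finally, $\n_2(X_j)\in\{0,\pm1\}$ for any simple $X_j$, with value $\pm1$ exactly when $X_j\cong X_j^*$ (Ng--Schauenburg), giving the left-hand equality; one may also check directly that the right-hand side is real, since $\Pi_0$ is closed under $(-)^*$, $\hat N_{a,b}^j=\hat N_{a^*,b^*}^j$, and $\ol{\th_a}=\th_a\inv=\th_{a^*}$.

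The step I expect to be the main obstacle is the first: verifying carefully that in the Bantay--Ng--Schauenburg proof the modularity of $\mcC$ is used \emph{solely} through the Verlinde formula, so that substituting Proposition~\ref{mock S-matrix properties}(d) is both legitimate and sufficient, with no surviving hidden appeal to non-degeneracy (for instance in handling the regular object $\bigoplus_i X_i\ot X_i^*$ or in any surgery-type move). Once this is pinned down, the remaining manipulations --- the folding identity above and the reality and $\pm1$ bookkeeping --- are routine.
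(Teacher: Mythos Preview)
Your folding computation in the second paragraph is correct and is exactly what the paper does once it has reduced to the ``naive'' Bantay-type sum over all of $\Pi$. The issue is the first paragraph: you correctly recognize that the modular Bantay formula
\[
\nu_2(X_j)=\frac{1}{\dim\mcB}\sum_{a,b\in\Pi}N_{a,b}^{j}\,d_ad_b\left(\frac{\theta_a}{\theta_b}\right)^2
\]
is not automatic for a degenerate $S$-matrix, and you propose to rerun the entire Bantay--Ng--Schauenburg argument with the $\hat S$-Verlinde formula in place of the usual one. But you do not carry this out, and you yourself flag it as the main obstacle. As written, the proposal therefore has a genuine gap: nothing in it actually establishes that $\nu_2(X_j)$ equals the full-$\Pi$ sum (and hence, via your folding, the stated $\Pi_0$-sum).

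The paper closes this gap by a different and much shorter route. Rather than redoing the indicator computation from scratch, it invokes the \emph{premodular} Frobenius--Schur formula from \cite{B2}, which for any premodular $\mcB$ reads
\[
\nu_2(X_j)=\frac{1}{\dim\mcB}\sum_{a,b\in\Pi}N_{a,b}^{j}\,d_ad_b\left(\frac{\theta_a}{\theta_b}\right)^2
-\theta_j\sum_{k\in\Irr(\mcB')\setminus\{\one\}}d_k\,\Tr(R_k^{jj}).
\]
For $\mcB$ super-modular the only non-trivial transparent simple is $f$, so the correction term is $-\theta_j\Tr(R_f^{jj})$. The key observation you are missing is that this term \emph{vanishes}: since $f$ acts fixed-point-freely on simples and $X_j$ is self-dual, one has $fX_j\not\cong X_j=X_j^*$, hence $f$ is not a summand of $X_j\otimes X_j$, so $R_f^{jj}$ is the zero map. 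After that, the folding you wrote down finishes the proof. Thus the paper's argument sidesteps any need to rederive Verlinde-type identities; the entire super-modular content is the one-line vanishing of the correction term.
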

  \begin{proof} {The first equality follows directly from \cite{NS}.}
  Recall from \cite{B2} that the formula of the second {Frobenius}-Schur indicator of a self-dual simple object in a premodular category is the following
  \begin{equation*}
      \n_{2}(X_{j})
      =\frac{1}{{\dim \mcB}}\Sum_{a,b {\in \Pi}}N_{a,b}^{j}d_{a}d_{b}\left(\frac{\th_{a}}{\th_{b}}\right)^{2} - \th_{j} \Sum_{k\in {\Irr(\B')}\setminus \{\textbf{1}\}} d_k \Tr (R_k^{j j}).
      \end{equation*}
      
      The unique non-trivial transparent object in $\B$ is the fermion $f$ since $\B$ is {super-modular}, so that
     { 
    \begin{align*}
      \n_{2}(X_{j})
      &=\frac{1}{\dim \mcB}\Sum_{a,b \in \Pi} N_{a,b}^{j}d_{a}d_{b}\left(\frac{\th_{a}}{\th_{b}}\right)^{2} - \th_{j} \Tr (R_f^{j j}) \\
      \end{align*}
       Since a transparent fermion does not fix simple objects and $X_j$ is self-dual, $f$ is not a subobject of $X_j\otimes X_j$. In particular the term $\th_{j} \Tr (R_f^{j j})$ vanishes and so
      \begin{align*}
      \n_{2}(X_{j})
      &=\frac{1}{\dim \mcB}\Sum_{a,b \in \Pi_0}\left(N_{a,b}^{j}+N_{a,f\cdot b}^{j}+N_{f \cdot a,b}^{j}+N_{f \cdot a,f\cdot b}^{j}
      \right) d_{a}d_{b}\left(\frac{\th_{a}}{\th_{b}}\right)^{2}\\
      &=\frac{1}{\dim \mcB}\Sum_{a,b \in \Pi_0}\left(N_{a,b}^{j}+N_{a,b}^{f\cdot j}+N_{a,b}^{f\cdot j}+N_{a,b}^{j}\right)d_{a}d_{b}\left(\frac{\th_{a}}{\th_{b}}\right)^{2}.\\
    \end{align*}
    }
   Since the naive fusion rules are given by $\hat{N}_{i,j}^k=N_{i,j}^k+N_{i,j}^{f\cdot k}$, the statement of the lemma now follows. 
  \end{proof}
  
  \section{Low Rank Super-Modular Categories}
 The first examples of non-split super-modular categories are the rank $2(k+1)$ adjoint subcategories $PSU(2)_{4k+2}\subset SU(2)_{4k+2}$  (see \cite{16fold}).  The modular categories $SU(2)_{4k+2}$ are  constructed as subquotient categories of representations of quantum groups $U_q\mathfrak{sl}_2$ with $q=e^{\frac{\pi i}{4k+4}}$.  Replacing $q$ by $q^t$ with $(t,4k+4)=1$ yields new categories with the same fusion rules, which may or may not be unitary.  In the notation of \cite{RowellUMC}, this family of modular categories would be denoted $\mcC(\mathfrak{sl}_2,q^t,4k+4)$.  We will use similar notation for the adjoint subcategories: $\mcC(\mathfrak{psl}_2,q^t,4k+4)$.  The two we will encounter here are for $k=1,2$.
 
 For $PSU(2)_6$ we label the simple objects $\one,X_1,fX_1,f$: this ordering conforms with the natural ordering of objects by highest weights in $\mathfrak{su}_2$. 
 The fusion rules can be derived from:
 $$f^{\otimes 2}\cong\one,\quad X_1^{\otimes 2}\cong \one\oplus X_1\oplus fX_1$$ and one sees that $d_1=1+\sqrt{2}$.   
 
 For $PSU(2)_{10}$ we have simple objects $\one,X_1,X_2,fX_2,fX_1,f$ (again, this ordering conforms with the standard ordering by highest weights in $\mathfrak{su}_2$)  with all fusion rules consequences of the following:
 \begin{itemize}
 \item $f^{\otimes 2}\cong \one$,
     \item $X_1^{\otimes 2}\cong\one\oplus X_1\oplus X_2$,
     \item $X_1\otimes X_2\cong X_1\oplus X_2\oplus fX_2$,
     \item $X_2^{\otimes 2}\cong\one\oplus X_1\oplus X_2\oplus fX_1\oplus fX_2$.
 \end{itemize}
  From this one computes that $d_1=1+\sqrt{3}$ and $d_2=2+\sqrt{3}$.

The goal of this subsection is {to} classify all non-split super-modular categories of rank$\leq 6$, namely:
  \begin{theorem}
Any non-split super-modular category of rank $4$ or $6$ is Grothendieck equivalent to $PSU(2)_6$ or $PSU(2)_{10}$, respectively.  Moreover, any such category is of the form $\mcC(\mathfrak{psl}_2,q^t,8)$ or $\mcC(\mathfrak{psl}_2,q^t,12)$ {with $(t,4k+ 4) = 1$}.
\end{theorem}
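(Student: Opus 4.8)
The strategy is to first pin down the fusion rules of a non-split super-modular category $\mcB$ of the stated rank, and then identify the categories realizing them. The crucial point is that the naive fusion ring $\hat{\mcU}_\B$ of the fermionic quotient has very small rank and, by Proposition~\ref{mock S-matrix properties}, carries a nearly rigid symmetric ``mock $S$-matrix'' $\hat S$; playing this off against the standard premodular formula for the true $S$-matrix, the identity $\theta_{fX}=-\theta_X$, and the Frobenius-Schur indicator formula of Lemma~\ref{FS-indicator super} cuts the list of possibilities down to the claimed cases.

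In rank $4$ one has $\Pi_0=\{0,1\}$ with $X_1$ self-dual ($\Pi_0$ is $\ast$-closed and $X_1^\ast\neq X_0$). Writing $X_1\ot X_1\cong X_0\op aX_1\op bf\op c\,fX_1$, rigidity gives $b=N_{1,1}^f=\dim\Hom(X_1,fX_1)=0$ because $f$ acts freely, so $\hat{\mcU}_\B$ is the rank-$2$ ring $x_1^2=x_0+mx_1$ with $m=a+c$, and Proposition~\ref{mock S-matrix properties} forces $\hat S_{0,1}=\hat S_{1,0}=d_1$ and $\hat S_{1,1}=-1$, where $d_1=\frac{1}{2}(m+\sqrt{m^2+4})$. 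If $m=0$ then $\mcB$ is pointed, and a pre-metric-group computation (the $\mbZ_2$ factor carrying the fermion, with $\theta_f=-1$, splits off orthogonally) makes $\mcB$ split, contradicting non-splitness. For $m\geq1$, evaluate the premodular identity $\tS_{i,j}=\theta_i^{-1}\theta_j^{-1}\sum_k N_{i^\ast,j}^k\theta_k d_k$ at $i=j=1$: using $\tS_{1,1}=\hat S_{1,1}=-1$ and $\theta_{fX_1}=-\theta_{X_1}$ (valid since $f$ is transparent with $\theta_f=-1$) gives $(a-c)\,d_1=-(\theta_1+\theta_1^{-1})$, whence $|a-c|\,d_1\leq2$ (as $|\theta_1|=1$), so $|a-c|\leq1$ since $d_1>1$. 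Together with $a+c=m$ this leaves only $m=1$ with $\{a,c\}=\{0,1\}$ and $m=2$ with $a=c=1$ (which also forces $\theta_1=\pm i$). In the $m=1$ cases one of $X_1$, $fX_1$ generates a rank-$2$ fusion subcategory with Fibonacci fusion rules, which (with positive dimensions) is modular, so Lemma~\ref{lemma: factorization} splits $\mcB$; and the possibility $a=c\geq2$ that would occur for larger $m$ is ruled out by substituting $\theta_1^2=-1$ into Lemma~\ref{FS-indicator super}, which reduces to $\frac{d_1(ad_1-1)}{ad_1+1}=\pm1$, incompatible with $d_1^2=2ad_1+1$ when $a\geq2$. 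Hence a non-split rank-$4$ super-modular category is Grothendieck equivalent to $PSU(2)_6$.

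Rank $6$ follows the same template, only longer. With $\Pi_0=\{0,1,2\}$ one first separates the two duality types ($X_1,X_2$ both self-dual, or $X_1^\ast=X_2$); the inequalities and indicator constraints above bound the fusion coefficients and the Frobenius-Perron dimensions of $X_1,X_2$, so that the candidate commutative rank-$3$ naive fusion rings, each with its forced symmetric projectively unitary $\hat S$, form a finite list. Running the $\tS_{i,j}$-identities and the indicators $\nu_2(X_j)=\pm1$ through this list eliminates every candidate except the $PSU(2)_{10}$ fusion rules in the non-split case; the candidates that survive but split are recognized either by a modular subcategory together with Lemma~\ref{lemma: factorization}, or by a $\mbZ_2$-grading with the fermion in degree $1$ together with Theorem~\ref{graded_splits}.

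Finally, to upgrade ``Grothendieck equivalent to $PSU(2)_{4k+2}$'' ($k=1,2$) to ``of the form $\mcC(\mathfrak{psl}_2,q^t,4k+4)$'': such a category has a minimal modular extension (for $PSU(2)_{4k+2}$ itself, $SU(2)_{4k+2}$ realizes one), which is a modular category with $SU(2)_{4k+2}$ fusion rules, hence of the form $\mcC(\mathfrak{sl}_2,q^t,4k+4)$ with $(t,4k+4)=1$ by the classification of braided categorifications of the $SU(2)_N$ fusion rings (a Kazhdan-Wenzl-type rigidity statement, equivalently the classification of spin modular categories of rank $\leq11$); restricting to the centralizer of the fermion then identifies $\mcB$ with $\mcC(\mathfrak{psl}_2,q^t,4k+4)$. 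I expect the main difficulty to lie in the rank-$6$ step (keeping the list of candidate rank-$3$ naive fusion rings provably finite and dispatching each branch cleanly by an indicator value or an $\tS$-identity) and, conceptually, in the final step, which leans on an external rigidity theorem and on the input that \emph{every} category with these fusion rules, not merely the quantum-group ones, admits a modular extension.
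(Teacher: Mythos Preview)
Your Grothendieck-equivalence arguments are on the right track and, for rank $4$, your direct computation via the premodular $S$-formula and the Frobenius--Schur indicator is essentially the content of \cite[Proposition 4.10]{B2}, which the paper simply cites. (One small slip: from $|a-c|\,d_1\le 2$ you need not only $|a-c|\le 1$ but also, when $|a-c|=1$, that $d_1\le 2$, which forces $m\le 1$; you use this implicitly.) Your rank-$6$ sketch matches the paper's strategy---reduce to a list of rank-$3$ naive fusion rings and then eliminate all but one via indicator constraints and splitting arguments---but you gloss over the fact that the list is \emph{not} a priori finite: the paper's Lemma~\ref{Lemma: Self-dual quotient} produces a one-parameter family indexed by $\alpha\in\mathbb{N}$, and it is precisely the indicator bound $\nu_2(X_2)=\pm 1$ that forces $\alpha\le 1$.

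The genuine gap is in your final step. Your argument for ``moreover'' assumes that an arbitrary super-modular category $\mcB$ with $PSU(2)_{4k+2}$ fusion rules admits a minimal modular extension; the existence of $SU(2)_{4k+2}$ over $PSU(2)_{4k+2}$ says nothing about an a priori unknown $\mcB$. This is exactly Conjecture~\ref{GFPC}, which is open. Moreover, even granting an extension, you assert that a modular category with $SU(2)_{4k+2}$ fusion rules must be $\mcC(\mathfrak{sl}_2,q^t,4k+4)$; the paper explicitly warns (see the Remark following the theorem) that Kazhdan--Wenzl-type rigidity leaves room for associativity twists at the $SU(2)$ level, so this step would require additional care. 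The paper sidesteps both issues entirely by invoking \cite[Corollary 8.8]{MPS} (see also \cite[Theorem 3.4]{GS}), which classifies \emph{braided} fusion categories Grothendieck equivalent to $PSU(2)_\ell$ directly: any such category is already of the form $\mcC(\mathfrak{psl}_2,q^t,\ell)$, with no appeal to extensions.
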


\begin{proof}
By Lemma \ref{nonsplit rank 6} below, we have that any non-split super-modular category of rank $6$ is Grothendieck equivalent to $PSU(2)_{10}$ whereas the rank $4$ case is contained in  \cite[Proposition 4.10]{B2}.   Now by \cite[Corollary 8.8]{MPS} (see also \cite[Theorem 3.4]{GS}) any braided fusion category Grothendieck equivalent to $PSU(2)_\ell$ for $\ell\geq 5$ is of the form $\mcC(\mathfrak{psl}_2,q^t,\ell)$ (The notation used there is $SO(3)_q$).  This completes the proof.
\end{proof}

\begin{remark}
We caution the reader that it is \text{not} the case that any modular category Grothendieck equivalent to $SU(2)_k$ is of the form $\mcC(\slf_2,Q,2k+4)$ for some primitive $(k+2)$th root of unity $Q$.  Indeed, \cite{KW} show that one may twist the associativity morphisms to obtain categories not of this form.  However, these twists have no effect on the subcategories $\mcC(\mathfrak{psl}_2,q^t,\ell)$, which explains why the classification from \cite{MPS} only depends on the choice of a root of unity.
\end{remark}

Split super-modular categories can easily be classified in these ranks, using the classification of modular categories of rank$\leq 3$ \cite{RSW}.  We recall that, up to fusion rules, modular categories of ranks $2$ and $3$ are \cite[Section 5.3]{RSW}:
\begin{enumerate}
\item $SU(2)_1$ (Semion)
\item $PSU(2)_3$ (Grothendieck equivalent to Fibonacci)
\item $SU(3)_1$ ($\Z_3$ theories)
\item $SU(2)_2$ (Grothendieck equivalent to Ising)
\item $PSU(2)_5$
\end{enumerate}

\subsection{Rank=6 Super-modular Categories}

In this subsection we will classify super-modular categories of rank $6$, up to Grothendieck equivalence.
We will follow the notation used in Subsection \ref{Mock S-matrix}. Let $\B$ be a super-modular category of rank $6$, with transparent fermion $f$.  Recall our partition of the isomorphism classes of simple objects $\Pi=\Pi_0\bigsqcup f\Pi_0$.   We have $d_{a}=d_{f\cdot a}$, $\hat{N}_{a,b}^{c}=N_{a,b}^{c}+N_{a,b}^{f\cdot c}$, and $\th_{a}=-\th_{f\cdot a}$ for all $a, b, c\in {\Pi_0}$.
 Notice that interchanging the labels $k$ and $f\cdot k$ for $k\neq 0$ (simultaneously interchanging duals if necessary) in the partition does not affect the fusion rules or the mock $S$-matrix but changes the signs of the corresponding $T$-matrix entries of the fermionic quotient.


  \begin{remark}\label{naive fusion matrix rank 6}
    If $\B$ is {a} self-dual super-modular category of rank $6$, then by  fusion rule symmetries and self-duality, the (naive) fusion matrices of the fermionic quotient are
    \begin{align*}
     \hat{N_{1}}&=\left(\begin{smallmatrix}0&1&0\\1&m&k\\0&k&\ell\end{smallmatrix}\right)
      ,\quad
     \hat{N_{2}}=\left(\begin{smallmatrix}0&0&1\\0&k&\ell\\1&\ell&n\end{smallmatrix}\right)
    \end{align*}
    for some {non-negative} integers $k,\ell,m,n$.
  \end{remark}

  \begin{lemma}
    \label{Lemma: Self-dual quotient}
    If $\B$ is self-dual super-modular category of rank $6$, then its fermionic quotient satisfies one of the following:
    \begin{itemize}
      \item[(i)] The dimensions are $d_{2}=1$ and $d_{1}^{2}=2$ and the naive fusion rules {$\hat{N}_j$} are the fusion rules
      of Ising;
      \item[(ii)] $d_{1}$ is a real root of $x^{3}-2x^{2}-x+1$ and
      $d_{2}=d_{1}/(d_{1}-1)$ is a root of $x^{3}-x^{2}-2x+1$, and the naive fusion rules are given by
      \begin{align*}
       \hat{N_{1}}&=
        \left(\begin{smallmatrix}
        0 & 1 & 0\\
        1 & 0 & 1\\
        0 & 1 & 1
        \end{smallmatrix}\right)
        ,\quad
        \hat{N_{2}} =
        \left(\begin{smallmatrix}
        0 & 0 & 1\\
        0 & 1 & 1\\
        1 & 1 & 1
        \end{smallmatrix}\right); 
      \end{align*}
      or
      \item[(iii)] there is $\a\in\mbbN$ such that the naive fusion rules (following the notation in Remark \ref{naive fusion matrix rank 6}) and the dimensions are given by
      \begin{align*}
        k&=2\a,\quad
        \ell=1,\quad
        m=2\a^{2},\quad
        n=\a\\
        d_{1}&=\a^{2}+1+\a\sqrt{\a^{2}+2}=1+\a d_{2}\\
        d_{2}&=\a+\sqrt{\a^{2}+2}.
      \end{align*}
    \end{itemize}
  \end{lemma}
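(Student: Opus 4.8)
The plan is to reduce the fermionic quotient to a short list of algebraic and Diophantine constraints on four non‑negative integers and then enumerate. By Remark~\ref{naive fusion matrix rank 6} the naive fusion matrices are the displayed $\hat N_1,\hat N_2$ with unknowns $k,\ell,m,n\in\mbbN$, and by Proposition~\ref{mock S-matrix properties} they generate a commutative, self‑dual, unital based ring $\hat{\mcU}_\B$ whose common Frobenius--Perron eigenvector is $(1,d_1,d_2)$ and which carries a \emph{symmetric} mock $S$-matrix $\hS$ with top row $(1,d_1,d_2)$. Reading off $\hat N_1(1,d_1,d_2)^T=d_1(1,d_1,d_2)^T$ and $\hat N_2(1,d_1,d_2)^T=d_2(1,d_1,d_2)^T$ yields
\[
1+md_1+kd_2=d_1^2,\qquad kd_1+\ell d_2=d_1d_2,\qquad 1+\ell d_1+nd_2=d_2^2,
\]
while associativity of $\hat{\mcU}_\B$ is readily checked to be equivalent to the single relation $k^2+\ell^2=1+m\ell+kn$. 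The crucial extra input is that $\hS$ is symmetric (Proposition~\ref{mock S-matrix properties}(a)); by Theorem~\ref{t:1}(i) $\hS$ is determined up to a relabelling of the two non‑Frobenius--Perron characters $\varphi_1,\varphi_2$, so this says that for some labelling $d_i\varphi_i(x_j)=d_j\varphi_j(x_i)$ for all $i,j$.

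First I would dispose of $d_1=1$. The first eigenvector equation then forces $k=m=0$, and the second forces $\ell=1$, so $x_2^2=x_0+x_1+nx_2$ and the characters of $\hat{\mcU}_\B$, written as $(\varphi(x_1),\varphi(x_2))$, are $(1,d_2)$, $(1,\lambda_-)$ and $(-1,0)$, where $\lambda_\pm=\tfrac{n\pm\sqrt{n^2+8}}{2}$ and $d_2=\lambda_+$. Symmetry of $\hS$ then forces $\lambda_-=-d_2$, i.e. $n=0$ (the opposite labelling of $\varphi_1,\varphi_2$ would force $d_2=0$); this is exactly case (iii) with $\a=0$, and case (i) is the same fusion rule after interchanging $X_1$ and $X_2$. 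That interchange reduces $d_2=1$ to the case just treated, so henceforth $d_1,d_2>1$; then the only invertible objects of $\B$ are $\one$ and $f$, $k\geq1$ (else $d_1=1$), and $\ell\geq1$ (the third eigenvector equation together with $k^2+\ell^2=1+m\ell+kn$ forces $d_2=1$ when $\ell=0$).

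In the range $d_1,d_2>1$ I would eliminate $d_2$ via the first two equations, getting $d_2=\dfrac{kd_1}{d_1-\ell}$ and exhibiting $d_1$ as the Perron root of the characteristic polynomial $P(x)=x^3-(m+\ell)x^2+(m\ell-k^2-1)x+\ell$ of $\hat N_1$; the relation $k^2+\ell^2=1+m\ell+kn$ eliminates $m$ and turns $P$ into a cubic in $k,\ell,n$. The symmetric mock $S$-matrix, together with the facts that the categorical dimensions of $\B$ are cyclotomic integers and that $\nu_2(X_1),\nu_2(X_2)=\pm1$ (Lemma~\ref{FS-indicator super}), should force $P$ to degenerate: either it acquires the rational root $-1$ — which, after reducing to $\ell=1$, $m=k(k-n)$, happens precisely when $k=2\a$, $n=\a$, $m=2\a^2$, giving $P(x)=(x+1)\bigl(x^2-(2\a^2+2)x+1\bigr)$ and $d_1=\a^2+1+\a\sqrt{\a^2+2}$, i.e. case (iii) — or $P$ is the cyclic cubic $x^3-x^2-2x+1$, pinned down up to the interchange $X_1\leftrightarrow X_2$ by $(k,\ell,m,n)=(1,1,0,1)$, i.e. case (ii). One then checks directly that the families (i), (ii), (iii) satisfy all constraints ($\hat N_j\geq0$, cyclotomic dimensions, $\nu_2=\pm1$, existence of a symmetric $\hS$), which is routine.

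The main obstacle is this degeneration step for $d_1,d_2>1$: one must show that after the $X_1\leftrightarrow X_2$ interchange $\ell$ can be taken minimal, in fact $\ell=1$; and then that the symmetric‑$\hS$ condition admits no irreducible cubic $P$ beyond $x^3-x^2-2x+1$ — equivalently, that the discriminant of $x^3-(k^2-kn+1)x^2-(kn+1)x+1$ is a perfect square only for $(k,n)\in\{(1,0),(1,1)\}$ — before finally solving $k^2+\ell^2=1+m\ell+kn$ together with $P(-1)=0$. A secondary but genuine subtlety is the bookkeeping of the two harmless labelling freedoms, $X_i\leftrightarrow fX_i$ within $\Pi=\Pi_0\sqcup f\Pi_0$ and $X_1\leftrightarrow X_2$; these are precisely why case (i) coincides with case (iii) at $\a=0$ and why the two superficially different presentations of case (ii) describe the same fusion rule.
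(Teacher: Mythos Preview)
Your approach is genuinely different from the paper's, and the difference matters.  The paper does not argue directly with the Diophantine equations at all: it simply observes that the fermionic quotient is a commutative self-dual rank-$3$ fusion ring equipped with a symmetric, projectively unitary mock $S$-matrix $\hS$ \emph{whose entries lie in an abelian extension of $\mbbQ$} (since they are entries of the $S$-matrix of a premodular category), and then invokes the rank-$3$ classification of \cite{RSW} verbatim.  The abelian-Galois-group condition is the engine of that classification: it is what forces, in the non-pointed case, that after reordering one may take $\ell=1$, and what cuts down the remaining one-parameter families to the cyclic cubic $x^3-x^2-2x+1$ and the quadratic family (iii).

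Your proposal, by contrast, never uses this Galois constraint.  You try to replace it with a combination of symmetry of $\hS$, cyclotomic integrality of the $d_i$, and the condition $\nu_2(X_j)=\pm 1$.  But you yourself flag the degeneration step as ``the main obstacle,'' and it is a real gap: symmetry of $\hS$ alone does not force $\ell=1$ after interchange, nor does it by itself force the irreducible cubic $P$ to be $x^3-x^2-2x+1$.  Your proposed test ``the discriminant of $P$ is a perfect square only for $(k,n)\in\{(1,0),(1,1)\}$'' is not the right condition either---what is needed is that $P$ generate an \emph{abelian} number field (equivalently, that its discriminant be a square times the square of an integer already lying in the splitting field), and establishing that this forces finitely many $(k,n)$ is exactly the content of the relevant argument in \cite{RSW}.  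Finally, invoking Lemma~\ref{FS-indicator super} here is premature: in the paper the Frobenius--Schur constraint plays no role in the present lemma and is only used afterwards to bound $\alpha\le 1$ in case~(iii).

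In short, the skeleton of your argument is sound and the $d_1=1$ case is handled correctly, but to close the $d_1,d_2>1$ case you should either import the abelian-Galois-group constraint explicitly (as the paper does) or give a complete direct argument for the two claims you isolate---and the latter essentially amounts to reproving the relevant portion of \cite{RSW}.
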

  \begin{proof}
  The proof from \cite{RSW} follows \textit{mutandis mutatis} in this case.
  
  Some important facts that are needed for this proof but are still true in the fermionic modular case are that the mock $S$-matrix has orthogonal columns and is symmetric, by Proposition \ref{mock S-matrix properties}, and the Galois group of the field generated by $\hat{S}$ is abelian since the $S$-matrix entries are obtained from a premodular category.  
  \end{proof}

\begin{remark}
Notice that if $\a = 0$ in Lemma \ref{Lemma: Self-dual quotient}(iii), then the fermionic quotient has the fusion rules of Ising (it recovers case (i) in Lemma \ref{Lemma: Self-dual quotient}).
\end{remark}
\begin{lemma}
In Lemma \ref{Lemma: Self-dual quotient}(iii), $\a \leq 1$. 
\end{lemma}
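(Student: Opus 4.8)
The plan is to derive a contradiction from assuming $\a\geq 2$ by pushing the numerical constraints coming from the fermionic $S$-matrix of Proposition~\ref{mock S-matrix properties} together with the fact that the Galois group of the field generated by $\hat S$ is abelian. First I would record the concrete data in case (iii): the dimensions $d_0=1$, $d_1=\a^2+1+\a\sqrt{\a^2+2}$, $d_2=\a+\sqrt{\a^2+2}$, so that $\dim(\B) = 2(1+d_1^2+d_2^2)$ lives in $\mbQ(\sqrt{\a^2+2})$ (a real quadratic field unless $\a^2+2$ is a square, which for $\a\geq 2$ it never is). Since $\hat S$ is a symmetric mock $S$-matrix and simultaneously diagonalizes $\hat N_1,\hat N_2$ with eigenvalues $\hat S_{i,j}/\hat S_{0,i}$, the rows of $\hat S$ (normalized by dividing by $\hat S_{0,i}$) are exactly the characters $\varphi_i$, i.e. the three ring homomorphisms $\hat\mcU_\B\to\mbC$; these are obtained by sending $d_1,d_2$ to the images of $\sqrt{\a^2+2}$ under $\Gal(\mbQ(\sqrt{\a^2+2})/\mbQ)$. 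Because the fusion rule is rank $3$ there are exactly three characters, so one of them must coincide with one obtained by Galois conjugation $\sqrt{\a^2+2}\mapsto-\sqrt{\a^2+2}$, giving the second column/row of $\hat S$, and the third character is forced.

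Next I would use orthogonality of the columns of $\hat S$ (equivalently $\hat S^2 = \tfrac{D^2}{2}C$ from Proposition~\ref{mock S-matrix properties}(a)) to pin down $\hat S$ entirely in terms of $\a$: writing $\delta=\sqrt{\a^2+2}$, the first column is $(1,d_1,d_2)^T$ up to a scalar $\hat S_{0,0}$, the ``Galois-conjugate'' column is $(1, \a^2+1-\a\delta, \a-\delta)^T$ up to a scalar, and the third column is determined by requiring orthogonality to the first two. Then $D^2/2 = \dim(\B)/2 = 1+d_1^2+d_2^2$, and the squared-norm condition $\sum_i |\hat S_{i,j}|^2 = D^2/2$ applied to the Galois-conjugate column (whose entries have the conjugate squared norm $1 + (\a^2+1-\a\delta)^2 + (\a-\delta)^2$) forces a polynomial identity in $\a$ after clearing the $\delta$ terms. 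This is the point where I expect the genuine arithmetic to bite: one needs that the Galois conjugate $\bar D^2/2$ of $D^2/2$ must equal the norm of the appropriate column, but the Galois conjugate of $1+d_1^2+d_2^2$ (replacing $\delta\mapsto-\delta$) is typically negative for $\a\geq 2$, whereas a sum of squared absolute values of entries of a column of $\hat S$ is positive — and this sign obstruction is the contradiction.

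More precisely, the key computation is: the total dimension $D^2 = \dim(\B)$ has a Galois conjugate (under $\delta\mapsto -\delta$) given by $2(1 + (\a^2+1-\a\delta)^2 + (\a-\delta)^2)$ with $\delta=\sqrt{\a^2+2}$; expanding, the conjugate of $d_1^2$ is $(\a^2+1)^2 + \a^2(\a^2+2) - 2\a(\a^2+1)\delta$ and the conjugate of $d_2^2$ is $\a^2 + (\a^2+2) - 2\a\delta$. A column of $\hat S$ associated to a Galois-conjugate character has squared length equal to $\overline{D^2/2}$ if and only if that conjugate is a positive real, but for $\a\geq 2$ the dominant negative contribution $-2\a(\a^2+1)\delta - 2\a\delta < 0$ outweighs the positive rational part, making it negative — impossible. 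Thus $\a\leq 1$. I would double-check the edge behavior: for $\a=1$ the conjugate $\delta=\sqrt 3$, $d_1 = 2+\sqrt 3$, $d_2 = 1+\sqrt 3$ — wait, these match $PSU(2)_{10}$'s dimensions (up to relabeling $d_1\leftrightarrow d_2$), so $\a=1$ is genuinely realized and the argument must only exclude $\a\geq 2$; the sign estimate above does exactly that since for $\a=1$ one can check the relevant conjugate value stays positive (it equals $2(1 + (2-\sqrt3)^2 + (1-\sqrt3)^2) = 2(1 + 7-4\sqrt3 + 4-2\sqrt3) = 2(12 - 6\sqrt3) = 24-12\sqrt3 > 0$), while for $\a\geq 2$ the linear-in-$\delta$ coefficient $-2\a(\a^2+2) $ grows fast enough to force negativity. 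The main obstacle is being careful that the ``Galois-conjugate column'' is really a column of $\hat S$ and not merely a Galois conjugate of a character — but this follows because there are only $r+1=3$ characters of $\hat\mcU_\B$, the Galois group permutes them, and $\Gal$ being abelian (hence the Galois action factors through the permutation action on columns as in Theorem~\ref{t:1}(i)) forces the conjugate character to appear as an actual column up to the permutation/rescaling described there.
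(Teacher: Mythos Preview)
Your approach has a fatal computational error, and more importantly, the underlying strategy cannot work.  The paper's proof uses the second Frobenius--Schur indicator (Lemma~\ref{FS-indicator super}), which brings in the twists $\theta_i$; you are trying to squeeze a contradiction out of the $\hat S$-matrix alone, and there is none to be found there.

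First, the explicit error: you claim that for $\a\ge 2$ the Galois conjugate of $D^2/2$ (under $\delta\mapsto -\delta$, $\delta=\sqrt{\a^2+2}$) is negative.  It is not.  A direct computation gives
\[
\frac{D^2}{2}=1+d_1^2+d_2^2=2(\a^2+2)(\a^2+1+\a\delta)=2(\a^2+2)\,d_1,
\]
and since $d_1\bar d_1=(\a^2+1)^2-\a^2(\a^2+2)=1$ we have $\bar d_1=1/d_1>0$, hence
\[
\overline{D^2/2}=2(\a^2+2)\,\bar d_1=\frac{2(\a^2+2)}{d_1}>0\quad\text{for every }\a.
\]
Your heuristic that the ``dominant negative contribution $-2\a(\a^2+2)\delta$ outweighs the positive rational part'' fails because $(\a^2+1)^2>\a^2(\a^2+2)$ by exactly $1$.

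Second, even if you set up the column-norm equation correctly, it is vacuous.  The three characters of $\hat{\mcU}_\B$ are $\varphi_0=(1,d_1,d_2)$, the Galois conjugate $(1,\bar d_1,\bar d_2)$, and a rational character $(1,-1,\a)$ (from the traces of $\hat N_1,\hat N_2$).  One checks via the symmetry $d_i\varphi_i(x_j)=d_j\varphi_j(x_i)$ that the conjugate character must sit in column $1$ and the rational one in column $2$.  The squared norm of column~$1$ is then $d_1^2\,\overline{D^2/2}=d_1^2\cdot 2(\a^2+2)/d_1=2(\a^2+2)d_1=D^2/2$, and the squared norm of column~$2$ is $d_2^2(1+1+\a^2)=(\a^2+2)d_2^2=2(\a^2+2)d_1=D^2/2$ since $d_2^2=2d_1$.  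Every orthogonality and norm constraint on $\hat S$ is satisfied identically in $\a$, so no value of $\a$ can be excluded by this method.  The bound $\a\le 1$ genuinely requires the extra information carried by the twists, which is exactly what the Frobenius--Schur indicator formula supplies.
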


\begin{proof}
Since $\B$ is self-dual, $\n_2(X_j) =\pm 1$, for $j= 0, 1, 2$. 

In case (iii) in Lemma \ref{Lemma: Self-dual quotient}, the naive fusion matrices are
 $\hat{N_{1}}=\left(\begin{smallmatrix}0&1&0\\1&2\a^2&2\a\\0&2\a&1\end{smallmatrix}\right)$, and 
 $\hat{N_{2}}=\left(\begin{smallmatrix}0&0&1\\0&2\a&1\\1&1&\a\end{smallmatrix}\right)$. It follows from the formula for the 2nd Frobenius-Schur indicator in Lemma \ref{FS-indicator super} that
$$\n_{2}(X_{2})
      =\frac{2}{D^{2}}\left(d_2 (\theta_2^2+\theta_2^{-2}) + 2\a d_1^2 + d_1d_2\left(\left(\frac{\theta_1}{\theta_2}\right)^2 +\left(\frac{\theta_1}{\theta_2}\right)^{-2} \right)+\a d_2^2\right). $$
{Assume $\a \ge 2$.
Since $d_{1} =1+\a d_{2} > d_2 > 2\a$ and $\n_{2}(X_{2})=\pm 1$, we have that
\begin{align*}
0  & = 2\a d_1^2 + \a d_2^2 + 2 d_1d_2 \Re\left(\frac{\theta_1}{\theta_2}
      \right)^2 + 2 d_2 \Re(\theta_2^2) \pm \frac{D^2}{2}  \\
  &\geq (2\a- 1) d_1^2 + (\a - 1) d_2^2 - 2 d_1 d_2 - 2 d_2 - 1\\
   &\geq 3 d_1^2 +  d_2^2 - 2 d_1 d_2 - 2 d_2 - 1\\
   &> d_1^2 +  (2\a-2) d_2   - 1 >0,
  \end{align*}
  which is a contradiction. Therefore, $\a\leq 1$ as stated.}
\end{proof}

\begin{corollary}\label{cor:naive}
If $\B$ is a self-dual super-modular category of rank 6, then it fermionic quotient satisfies one of the following:
\begin{enumerate}
    \item The $S$-matrix $\hat{S}$ of the fermionic quotient and naive fusion rules ${\hat{\mcN}}$ correspond to the Ising category.
    \item The $S$-matrix $\hat{S}$ of the fermionic quotient and naive fusion rules ${\hat{\mcN}}$ correspond to those of $PSU(2)_5$.
    \item The $S$-matrix of the fermionic quotient is of the form $\hat{S} = \left(\begin{smallmatrix}
          1 & 2+\sqrt{3} & 1+\sqrt{3}\\
          2+\sqrt{3} & 1 & -1-\sqrt{3}\\
          1+\sqrt{3} & -1-\sqrt{3} & 1+\sqrt{3}
        \end{smallmatrix}\right)$, the naive fusion rules are given by $\hat{N}_{1}= \left(\begin{smallmatrix}
          0 & 1 & 0\\
          1 & 2 & 2\\
          0 & 2 & 1
        \end{smallmatrix}\right)$ and $\hat{N}_{2}=\left(\begin{smallmatrix}
          0 & 0 & 1\\
          0 & 2 & 1\\
          1 & 1 & 1
        \end{smallmatrix}\right)$, and the dimensions are $d_1 = 2+\sqrt{3}$ and $d_2 = 1+\sqrt{3}$.
\end{enumerate}

\end{corollary}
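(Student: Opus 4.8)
The plan is to obtain the corollary as a consolidation of the structural results just proved. By Lemma~\ref{Lemma: Self-dual quotient} the fermionic quotient of a self-dual super-modular category $\B$ of rank $6$ lies in exactly one of the three cases (i), (ii), (iii) listed there; by the lemma just proved, case (iii) forces $\alpha\le 1$; and by the remark following Lemma~\ref{Lemma: Self-dual quotient}, case (iii) with $\alpha=0$ has the Ising fusion rules and is thus already accounted for by case (i). Hence only three possibilities survive: case (i), case (ii), and case (iii) with $\alpha=1$. In each I would then (a) record that the naive fusion rules $\hat{\mcN}$ are the ones asserted in the statement, and (b) determine $\hat S$ from those fusion rules, thereby matching the three surviving possibilities with items (1), (2), (3).

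For (1) and (2), part (a) is immediate: the matrices of Lemma~\ref{Lemma: Self-dual quotient}(i) are the fusion rules of Ising (with $d_2=1$, $d_1^2=2$) and those of Lemma~\ref{Lemma: Self-dual quotient}(ii) are the fusion rules of $PSU(2)_5$. For part (b) I would argue that $\hat S$ is forced: by Proposition~\ref{mock S-matrix properties}, $\hat S$ is a symmetric simultaneous diagonalizer of $\hat{\mcN}$, projectively unitary in the sense $\hat S\,\ol{\hat S}=\frac{D^2}{2}I$, with strictly positive first row $(1,d_1,d_2)$. By Theorem~\ref{t:1}(i) any two simultaneous diagonalizers of a fixed commutative fusion rule differ only by right multiplication by a permutation matrix and a nonsingular diagonal matrix, and imposing symmetry, the normalization of the first row by the (positive) Frobenius--Perron dimensions, and projective unitarity removes this freedom; the resulting $\hat S$ is then the $S$-matrix of the Ising modular category (resp.\ of $PSU(2)_5$). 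This yields items (1) and (2).

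For case (iii) with $\alpha=1$ the computation is explicit. Substituting $\alpha=1$ into Lemma~\ref{Lemma: Self-dual quotient}(iii) and the notation of Remark~\ref{naive fusion matrix rank 6} gives $k=2$, $\ell=1$, $m=2$, $n=1$, hence the matrices $\hat N_1$ and $\hat N_2$ displayed in item (3), together with $d_1=2+\sqrt3$, $d_2=1+\sqrt3$ and $\dim\B=2(1+d_1^2+d_2^2)=24+12\sqrt3$ (these are, up to interchanging the labels $1$ and $2$, the naive fusion rules of $PSU(2)_{10}$). The matrix $\hat N_1$ has three distinct eigenvalues, $2+\sqrt3$, $2-\sqrt3$ and $-1$, with eigenlines spanned by $(1,d_1,d_2)$, $(d_1,1,-d_2)$ and $(1,-1,1)$ respectively — automatically eigenvectors of $\hat N_2$ as well, since $\hat N_1\hat N_2=\hat N_2\hat N_1$ — so again by Theorem~\ref{t:1}(i), together with symmetry, the normalization of the first row as $(1,d_1,d_2)$, and the identity $\hat S\,\ol{\hat S}=\frac{D^2}{2}I$, $\hat S$ is forced to be exactly the matrix exhibited in item (3); a direct check then confirms $\hat S^2=(12+6\sqrt3)I$.

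The substantive work is all contained in Lemma~\ref{Lemma: Self-dual quotient} and in the lemma bounding $\alpha$, so the only remaining task is the identification of $\hat S$ in each branch, which is routine linear algebra over a small number field. The one point to watch is that the symmetry and positivity normalizations genuinely eliminate the ambiguity allowed by Theorem~\ref{t:1}(i), so that ``$\hat S$ corresponds to Ising / $PSU(2)_5$'' is unambiguous; beyond that I anticipate no real obstacle.
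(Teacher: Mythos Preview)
Your proposal is correct and is essentially the paper's intended approach: the corollary is an immediate consolidation of Lemma~\ref{Lemma: Self-dual quotient}, the remark that $\alpha=0$ reproduces Ising, and the lemma bounding $\alpha\le 1$, leaving precisely the three surviving cases. Your additional step of pinning down $\hat S$ in each case via Theorem~\ref{t:1}(i) and Proposition~\ref{mock S-matrix properties} is the natural elaboration (the paper leaves this implicit), and your explicit computation for $\alpha=1$ is correct.
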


\begin{lemma}\label{quotient Ising}
If $\B$ is a self-dual rank $6$ super-modular category whose fermionic quotient has {the same fusion rules as an} Ising category $\mcI$, then $\B\cong\mcI\boxtimes\sVec$.
\end{lemma}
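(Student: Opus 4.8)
The plan is to produce a $\mathbb{Z}_2$-grading of $\mathcal{B}$ with the fermion $f$ in the nontrivial component and then apply Theorem~\ref{graded_splits}. Since the fermionic quotient has Ising fusion rules, $\Pi_0$ consists of $\one=X_0$, an object $s:=X_1$ with $\operatorname{FPdim}(s)=\sqrt2$, and an invertible object $v':=X_2$; thus the six simple objects of $\mathcal{B}$ are the four invertibles $\one,f,v',fv'$ and the two objects $s,fs$ of dimension $\sqrt2$, and $\operatorname{FPdim}(\mathcal{B})=4\cdot 1+2\cdot 2=8$. As $\mathcal{B}$ is self-dual, $N_{s,s}^{\one}=\dim\operatorname{Hom}(s,s^{*})=1$ and $\hat N_{s,s}^{v'}=N_{s,s}^{v'}+N_{s,s}^{fv'}=1$, whence $s^{\otimes 2}\cong\one\oplus v$, where $v$ is whichever of $v',fv'$ occurs; note $v$ is invertible and self-dual, so $\langle v\rangle=\{\one,v\}$.

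First I would compute the adjoint subcategory $\mathcal{B}_{\mathrm{ad}}$. For every simple object $X$ one has $X\otimes X^{*}\cong X^{\otimes 2}$, which is $\one$ when $X$ is invertible and $\one\oplus v$ when $X\in\{s,fs\}$ (using $(fs)^{\otimes 2}\cong s^{\otimes 2}$); hence $\mathcal{B}_{\mathrm{ad}}\subseteq\{\one,v\}$, and since $v\in\mathcal{B}_{\mathrm{ad}}$ we get $\mathcal{B}_{\mathrm{ad}}=\{\one,v\}$ with $\operatorname{FPdim}(\mathcal{B}_{\mathrm{ad}})=2$. Consequently (\cite{GN}) the universal grading group $U=U(\mathcal{B})$ has order $\operatorname{FPdim}(\mathcal{B})/\operatorname{FPdim}(\mathcal{B}_{\mathrm{ad}})=4$, with trivial component $\mathcal{B}_e=\mathcal{B}_{\mathrm{ad}}$ and every homogeneous component of $\operatorname{FPdim}$ equal to $2$. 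Because $\operatorname{FPdim}(s)^2=\operatorname{FPdim}(fs)^2=2$, the singletons $\{s\}$ and $\{fs\}$ must each be a full component, and the two remaining simples form the last component $\{f,fv\}$. Thus the degrees $[s],[fs],[f]$ are the three nonidentity elements of $U$, each squaring to $e$ (since $s^{\otimes 2},(fs)^{\otimes 2},f^{\otimes 2}\in\mathcal{B}_{\mathrm{ad}}$), so $U\cong\mathbb{Z}_2\times\mathbb{Z}_2$ and $[fs]=[f][s]$.

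Next I would push the universal grading forward along the surjection $\pi\colon U\to U/\langle[s]\rangle\cong\mathbb{Z}_2$. Since $[f]\notin\{e,[s]\}$ we have $\pi([f])\ne 0$, so the induced $\mathbb{Z}_2$-grading is $\mathcal{B}=\mathcal{B}_0\oplus\mathcal{B}_1$ with $\mathcal{B}_0=\{\one,v,s\}$ and $\mathcal{B}_1=\{f,fv,fs\}$; in particular $f\in\mathcal{B}_1$. By Theorem~\ref{graded_splits}, $\mathcal{B}\cong\mathcal{B}_0\boxtimes\sVec$ as unitary premodular categories with $\mathcal{B}_0$ modular. Finally $\mathcal{B}_0$ has rank $3$ with $v^{\otimes 2}\cong\one$, $s^{\otimes 2}\cong\one\oplus v$, and $s\otimes v\cong s$ (since $s\otimes v$ is a simple object of dimension $\sqrt2$ in the fusion subcategory $\mathcal{B}_0=\{\one,v,s\}$, hence $\cong s$; equivalently $v\otimes s\in\mathcal{B}_e\otimes\mathcal{B}_{[s]}=\mathcal{B}_{[s]}=\{s\}$). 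These are the Ising fusion rules, so the modular category $\mathcal{B}_0$ is an Ising category and $\mathcal{B}\cong\mathcal{I}\boxtimes\sVec$.

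The one real difficulty is knowing that $\{\one,v,s\}$ is closed under tensor product, i.e. that $s\otimes v\cong s$ rather than $s\otimes v\cong fs$: the fusion rules alone allow either, and the $S$-matrix is no help because $\tilde S$ has the block form $\left(\begin{smallmatrix}\hat S&\hat S\\\hat S&\hat S\end{smallmatrix}\right)$ and hence takes equal values on $Y$ and $f\otimes Y$. The universal grading circumvents this for free: once $|U(\mathcal{B})|=4$ is known, the Frobenius–Perron dimensions of the simple objects force $\{s\}$ to be its own graded component, which is exactly the statement $s\otimes v\cong s$. (Alternatively one can deduce $\theta_v=-1$ and $s\otimes v\cong s$ directly from $\tilde S_{s,s}=\hat S_{\sigma\sigma}=0$ and $\tilde S_{v,s}=\hat S_{\sigma\psi}=-\sqrt2$ together with the formula $\tilde S_{X,Y}=\theta_X^{-1}\theta_Y^{-1}\sum_Z N_{X,Y}^{Z}d_Z\theta_Z$, and then finish via Lemma~\ref{lemma: factorization} applied to the modular subcategory generated by $s$.)
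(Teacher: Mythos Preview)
Your proof is correct, but the paper's route is considerably shorter, and the step you flag as ``the one real difficulty'' is in fact not a difficulty at all. Once $v$ is \emph{defined} as the non-unit simple summand of $s^{\otimes 2}$, Frobenius reciprocity gives $N_{v,s}^{s}=N_{s,s^*}^{v}=N_{s,s}^{v}=1$ (everything is self-dual), and a dimension count yields $v\otimes s\cong s$ immediately. This is exactly how the paper argues: from $\sigma^{\otimes 2}=\one\oplus v$ with $v\in\{\psi,f\psi\}$ it reads off $v\otimes\sigma=\sigma$, so the fusion subcategory generated by $\sigma$ is $\{\one,v,\sigma\}$ with Ising fusion rules; since any braided Ising category is modular \cite[Corollary~B.12]{DGNO1}, M\"uger's factorization \cite[Theorem~4.2]{M2} gives $\mathcal{B}\cong\mathcal{I}\boxtimes\sVec$ directly, bypassing Theorem~\ref{graded_splits} entirely.

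Your universal-grading argument recovers the same closure fact $v\otimes s\cong s$ by a different mechanism (forcing $\{s\}$ to be its own $U(\mathcal{B})$-component) and then invokes Theorem~\ref{graded_splits}. This is more machinery than needed, but it does buy something: you never need the external fact that braided Ising categories are automatically modular, since modularity of $\mathcal{B}_0$ is an \emph{output} of Theorem~\ref{graded_splits} rather than an input. Your parenthetical alternative --- locate the modular subcategory generated by $s$ and apply Lemma~\ref{lemma: factorization} --- is essentially the paper's argument, though the paper obtains $v\otimes s\cong s$ from reciprocity rather than from $\tilde S$.
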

\begin{proof}
By assumption the fermionic quotient of $\B$ {has the same fusion rules as an} Ising category $\mcI$ and it has simple objects $\one, \s,\ps$. Then $\B$ has simple objects $\one, \s,\ps, f, f\s, f\ps$.  

Since $1 = \hat{N}_{\s,\s}^{\ps} = N_{\s,\s}^{\ps} + N_{\s,\s}^{f\ps}$, the object $\s^2$ in $\B$ either contains $\ps$ or $f\ps$. Then, $\s^2 = \one \oplus \ps$ and $\ps\otimes \s = \s$ or $\s^2 = \one \oplus f\ps$ and $f\ps\otimes \s = \s$. Notice that it is always true that $(f\ps)^2 = \one = \ps^2$ in $\B$. Therefore, the subcategory of $\B$ generated by $\s$ is an Ising category, which is always modular by \cite[Corollary B.12]{DGNO1}. Then, it follows from \cite[Theorem 4.2]{M2} that $\B\cong \mcI\boxtimes \sVec$.
\end{proof}  

\begin{lemma}\label{quotient A}
If $\B$ is a self-dual rank $6$ super-modular category whose fermionic quotient has {the same fusion rules as}  $PSU(2)_5$, then $\B\cong\mcD\boxtimes\sVec$ where $\mcD$ is a $PSU(2)_5$ category. 
\end{lemma}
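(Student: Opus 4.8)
The plan is to force the fusion rules of $\B$ down to those of $PSU(2)_5$ on a suitable labeling, so that $\B$ becomes $\Z_2$-graded with $f$ in the nontrivial component, and then apply Theorem \ref{graded_splits}.

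First I would write the simple objects as $\one, X_1, X_2, f, fX_1, fX_2$ with $\Pi_0=\{0,1,2\}$, all self-dual. By Corollary \ref{cor:naive}(2) the naive fusion matrices $\hat N_1,\hat N_2$ are those of $PSU(2)_5$, in particular each entry is $0$ or $1$; hence whenever $\hat N_{a,b}^c=1$ exactly one of $N_{a,b}^{X_c}, N_{a,b}^{fX_c}$ equals $1$, and whenever $\hat N_{a,b}^c=0$ both vanish. Since $f$ acts freely, $X_i\not\cong f\otimes X_i$, so the $\one$-summand of $X_i^{\otimes 2}$ cannot be replaced by $f$; thus
$$X_1^{\otimes 2}\cong\one\oplus f^{\beta_1}X_1\oplus f^{\beta_2}X_2,\qquad X_2^{\otimes 2}\cong\one\oplus f^{\beta_3}X_1$$
for some $\beta_i\in\{0,1\}$, where $f^0Y:=Y$ and $f^1Y:=f\otimes Y$. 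A short computation with Frobenius reciprocity and self-duality then shows $X_1\otimes X_2\cong f^{\beta_2}X_1\oplus f^{\beta_3}X_2$, so the three signs $\beta_1,\beta_2,\beta_3$ determine all fusion rules of $\B$.

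The crux — and the step it is easiest to botch — is that these signs are not independent: associativity of $K_0(\B)$ already constrains them. Expanding $[X_1][X_1][X_2]$ in both bracketings and comparing the $X_2$-isotypic components (all other components match identically) forces the single relation $\beta_1=\beta_3$; in fact for $\beta_1\neq\beta_3$ the putative fusion ring fails to be associative, so no such $\B$ exists. I expect getting this associativity bookkeeping right to be the main obstacle: it is tempting but unjustified to simply assert that the ``untwisted'' objects $\one, X_1, X_2$ span a $PSU(2)_5$ subcategory, and it is precisely the relation $\beta_1=\beta_3$ — visible neither from the naive fusion rules nor from Frobenius reciprocity alone — that makes such a subcategory available.

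Granting $\beta_1=\beta_3$, recall that the partition $\Pi=\Pi_0\sqcup f\Pi_0$ is non-canonical: interchanging $X_1\leftrightarrow fX_1$ (adjusting duals harmlessly, which preserves all fusion rules) flips $\beta_1$ and $\beta_3$ simultaneously and fixes $\beta_2$, while interchanging $X_2\leftrightarrow fX_2$ flips $\beta_2$ and fixes $\beta_1,\beta_3$. Since $\beta_1=\beta_3$, these two moves arrange $\beta_1=\beta_2=\beta_3=0$. With all signs trivial, $\{\one, X_1, X_2\}$ is closed under tensor products and duals, so $\mcD:=\langle X_1,X_2\rangle$ is a fusion subcategory of $\B$ with the fusion rules of $PSU(2)_5$, and $\B=\mcD\oplus(f\otimes\mcD)$ is a faithful $\Z_2$-grading with $f$ in the nontrivial piece. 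Theorem \ref{graded_splits} now gives $\B\cong\mcD\boxtimes\sVec$ with $\mcD$ modular, hence a $PSU(2)_5$ category, which is the claim. (One could instead finish without the grading: the M\"uger center of $\mcD$ is a symmetric subcategory, so its simple objects have positive integer dimension, whereas $\dim X_1$ and $\dim X_2$ are irrational by Lemma \ref{Lemma: Self-dual quotient}(ii); thus $\mcD$ is modular, \cite[Theorem 4.2]{M2} gives $\B\cong\mcD\boxtimes C_\B(\mcD)$, and comparing $\FPdim$'s together with $f\in C_\B(\mcD)$ forces $C_\B(\mcD)=\sVec$.)
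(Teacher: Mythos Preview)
Your proof is correct and follows essentially the same approach as the paper. Both arguments parametrize the undetermined fusion-rule signs, use a single associativity expansion to impose one relation among them, and kill the remaining freedom by the $X_i\leftrightarrow fX_i$ relabeling before invoking Theorem~\ref{graded_splits}. The only cosmetic differences are that you swap the roles of $X_1$ and $X_2$ relative to the paper's proof (so your triple product $X_1X_1X_2$ corresponds to the paper's $X_1X_2X_2$) and that the paper relabels first and then runs associativity to pin down the last sign, whereas you derive the relation $\beta_1=\beta_3$ first and relabel afterward.
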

\begin{proof}
The simple objects of the quotient are denoted $\one, X_1$ and $X_2$, where $X_1$ is the $d$-dimensional object and $X_2$ has dimension $d^2-1$, where $d = 2\cos(\frac{\pi}{7})$ (see \cite[5.3.6]{RSW} for details on this category). We denote by $\one, X_1, X_2, f, fX_1, fX_2$ the simple objects in $\B$.

 Notice that when defining a fermionic quotient there is always a non-canonical labeling choice between objects $X$ and $fX$--indeed, we obviously have $\hat{N}_{X}=\hat{N}_{fX}$.  In particular when lifting naive fermionic fusion rules to a super-modular category we are free to interchange simple objects $X\leftrightarrow fX$ due to this labeling ambiguity.  For notational convenience we use $f\cdot i$ to label the matrix entry of $f\ot X_i$ in what follows. Since $1=\hat{N}_{2,2}^2=N_{2,2}^2+N_{2,2}^{f\cdot 2}$ and $N_{f\cdot 2,f\cdot 2}^{f\cdot 2}=N_{2,2}^{f\cdot 2}$ we may assume $N_{2,2}^2=1$ and $N_{2,2}^{f\cdot 2}=0$ by interchanging $X_2$ and $fX_2$ if necessary.  Similarly, we have $1=\hat{N}_{2,2}^1=N_{2,2}^1+N_{2,2}^{f\cdot1}$ so that interchanging $X_1$ and $fX_1$ allows us to assume $N_{2,2}^1=1$ and $N_{2,2}^{f\cdot 1}=0$.  After these two labeling choices, all remaining fusion rules for $\B$ can be derived from the following by tensoring with $f$:
        \begin{itemize}
        \item $f^{\ot 2}=\one$
        \item $X_1^{\ot 2}=\one\op aX_2\op b (fX_2)$ where $a+b=1$
        \item $X_1\ot X_2= aX_1\op X_2 \op b(fX_1)$ and 
        \item $X_2^{\ot 2}=\one\op X_1\op X_2$.
        \end{itemize}
        
Computing $X_1\ot X_2^{\ot 2}$ in two ways and comparing the multipicities of $X_1$ (which are $(1+a^2+b^2)$ and $(1+a)$) reveals that $2a^2-3a+1 =0$, which has solution $(a,b) = (1,0)$.  
Thus, the subcategory $\mcD$ of $\B$ generated by $X_1$ and $X_2$ has rank $3$ and is Grothendieck equivalent to $PSU(2)_5$. Moreover, $\B$ is graded with $\B_0 = \mcD$ and $\B_1 = f\mcD$. Therefore it follows Theorem \ref{graded_splits} that $\B\cong \mcD\boxtimes \sVec$, and $\mcD$ is a $PSU(2)_5$ modular category.
\end{proof}

\begin{lemma}\label{nonsplit rank 6}
If $\B$ is a self-dual rank $6$ category whose fermionic quotient is the one in Lemma Corollary \ref{cor:naive}(iii), then $\B$ is Grothendieck equivalent to $PSU(2)_{10}$.
\end{lemma}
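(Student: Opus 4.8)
## Proof proposal

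The plan is to start from the data in Corollary \ref{cor:naive}(iii): the naive fusion rules $\hat{N}_1, \hat{N}_2$ and the $S$-matrix $\hat{S}$ of the fermionic quotient are known explicitly, with $d_1 = 2+\sqrt 3$, $d_2 = 1+\sqrt 3$, and $D^2 = 2(1 + d_1^2 + d_2^2) = 2(12+6\sqrt3)$. First I would lift these naive fusion rules to genuine fusion rules of $\B$ on the six simple objects $\one, X_1, X_2, fX_2, fX_1, f$. As in Lemmas \ref{quotient Ising} and \ref{quotient A}, each equation $\hat N_{i,j}^k = N_{i,j}^k + N_{i,j}^{f\cdot k}$ splits a known nonnegative integer into two nonnegative integers, and the labeling ambiguity $X_i \leftrightarrow fX_i$ (for $i=1,2$) lets me normalize some of these choices. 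Concretely, $\hat N_{2,2}^0 = 1$ forces $f \notin X_2^{\ot 2}$ (self-duality), $\hat N_{1,1}^2 = 2$ and $\hat N_{2,2}^2 = 2$ must be distributed, etc. I expect that after using the two interchange freedoms and imposing associativity (comparing multiplicities in triple products like $X_1 \ot X_1 \ot X_2$ and $X_2^{\ot 3}$, exactly the trick used in Lemma \ref{quotient A}), the fusion rules are pinned down uniquely, and they should match those of $PSU(2)_{10}$ as listed in Section 3 — in particular $X_1^{\ot 2} = \one \op X_1 \op X_2$, $X_1 \ot X_2 = X_1 \op X_2 \op fX_2$, $X_2^{\ot 2} = \one \op X_1 \op X_2 \op fX_1 \op fX_2$.

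The main point to check is that the lift is \emph{consistent} — i.e. that at least one distribution of the naive rules satisfies the associativity constraints and the resulting $\{N_i\}$ genuinely form a based ring — and that it is \emph{unique} up to the relabeling. Uniqueness is what gives Grothendieck equivalence to $PSU(2)_{10}$ rather than merely "some super-modular category with this quotient." The key steps, in order: (1) write down the general lift with undetermined nonnegative integer parameters subject to $N_{i,j}^k + N_{i,j}^{f\cdot k} = \hat N_{i,j}^k$; (2) use $f \ot (-)$ compatibility and self-duality to cut the parameters down; (3) normalize using $X_1 \leftrightarrow fX_1$ and $X_2 \leftrightarrow fX_2$; (4) impose associativity on enough triple tensor products to eliminate the remaining freedom; (5) identify the result with the $PSU(2)_{10}$ fusion ring. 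Steps (1)–(3) are bookkeeping; step (4) is the substantive computation but is a finite, small linear/quadratic system in a handful of variables.

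The hard part, I expect, is step (4): making sure the associativity checks are \emph{sufficient} to force uniqueness, rather than just necessary. In the $PSU(2)_5$ case (Lemma \ref{quotient A}) a single comparison of $X_1$-multiplicities in $X_1 \ot X_2^{\ot 2}$ sufficed; here, with a larger fusion ring and a genuinely non-split answer, I anticipate needing two or three such comparisons and possibly an eigenvalue/dimension sanity check ($d$ must be the Frobenius–Perron eigenvector of each $N_i$, and $\FPdim(\B) = D^2$ since $\B$ is unitary). A secondary subtlety is that, unlike the split cases, I cannot invoke Theorem \ref{graded_splits}: the fermion $f$ need not lie in a faithful $\mbZ_2$-grading component, so I must genuinely work with the full six-object fusion ring throughout. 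Once the fusion rules are fixed, Grothendieck equivalence to $PSU(2)_{10}$ is immediate, and the preceding theorem then upgrades this to the statement that $\B \cong \mcC(\mathfrak{psl}_2, q^t, 12)$.
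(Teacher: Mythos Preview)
Your overall strategy---lift the naive rules to $\B$, normalize using the two interchanges $X_i\leftrightarrow fX_i$, and impose associativity on a triple product---is exactly what the paper does (after first swapping the labels $1\leftrightarrow 2$ so that $d_1=1+\sqrt3$ is the smaller dimension; with that swap one normalizes via $\hat N_{1,1}^1=\hat N_{1,1}^2=1$, not the entries equal to $2$ that you wrote). A single associativity check on $X_1^{\ot 2}\ot X_2$ already reduces the free parameters to two discrete solutions.

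The genuine gap is in your step~(4). Associativity does \emph{not} force uniqueness: besides the $PSU(2)_{10}$ solution there is a second, perfectly consistent fusion ring in which $\one,X_1,X_2$ close up among themselves (i.e.\ the lift reproduces the naive quotient rules on the nose and $\B$ becomes $\mbZ_2$-graded with $f\in\B_1$). Neither further associativity comparisons nor a Frobenius--Perron dimension check will eliminate this, since it is an honest based ring with the correct eigenvector. The paper kills this spurious branch by invoking Ostrik's classification of rank-$3$ premodular categories \cite[Theorem~3.5]{O4}: no premodular category realizes those particular rank-$3$ fusion rules, so that branch cannot come from an actual $\B$. (Equivalently, in that branch Theorem~\ref{graded_splits} would force $\B\cong\B_0\boxtimes\sVec$ with $\B_0$ modular of rank~$3$, and no such $\B_0$ exists.) You will need this external input; the purely combinatorial program you outline cannot finish on its own.
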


\begin{proof}
After interchanging the labels $1$ and $2$ in Corollary \ref{cor:naive}(iii) we have the following naive fusion rules
$$\hat{N}_1=\left(\begin{smallmatrix}
          0 & 1 & 0\\
          1 & 1 & 1\\
          0 & 1 & 2
        \end{smallmatrix}\right)  \text{ and } \hat{N}_{2}=\left(\begin{smallmatrix}
          0 & 0 & 1\\
          0 & 1 & 2\\
          1 & 2 & 2
        \end{smallmatrix}\right).$$
        Since $1=\hat{N}_{1,1}^1=N_{1,1}^1+N_{1,1}^{f\cdot 1}$ and $N_{f\cdot 1,f\cdot 1}^{f\cdot 1}=N_{1,1}^{f\cdot 1}$ we may assume $N_{1,1}^1=1$ and $N_{1,1}^{f\cdot 1}=0$ by interchanging $X_1$ and $fX_1$ if necessary.  Similarly, we have $1=\hat{N}_{1,1}^2=N_{1,1}^2+N_{1,1}^{f\cdot2}$ so that interchanging $X_2$ and $fX_2$ allows us to assume $N_{1,1}^2=1$ and $N_{1,1}^{f\cdot 2}=0$.  After these two labeling choices, all remaining fusion rules for $\B$ can be derived from the following by tensoring with $f$:
        \begin{itemize}
        \item $f^{\ot 2}=\one$
            \item $X_1^{\ot 2}=\one\op X_1\op X_2$
            \item $X_1\ot X_2=X_1\op aX_2\op b(fX_2)$ where $a+b=2$ and 
            \item $X_2^{\ot 2}=\one\op aX_1\op cX_2\op d(fX_2)\op b(fX_1)$ where $c+d=2$.
        \end{itemize}
Computing $X_1^{\ot 2}\ot X_2$ in two ways and comparing the multipicities of $X_2$ (which are $(a+c+1)$ and $(1+a^2+b^2)$) reveals that $2\,{b}^{2}-3\,b+d=0$, which has solutions $(b,d)\in\{(0,0),(1,1)\}$.  But if $b=d=0$ then $a=c=2$ and $\one,X_1$ and $X_2$ form a premodular subcategory, contradicting \cite[Theorem 3.5]{O4}.  Thus $a=b=c=d=1$, which yeild the fusion rules of $PSU(2)_{10}$ described above.

\end{proof}

\begin{lemma}\label{non-self dual rank 6}
If $\B$ is a non-self dual super-modular category of rank $6$, then there is a primitive 3rd root of unity $\omega$ such that
$\tS=\hat{S}\otimes\left(\begin{smallmatrix}1&1\\1&1\end{smallmatrix}\right)$, and $\hat{S}=\left(\begin{smallmatrix}
      1 & 1 & 1\\
      1 & \w & \w^{2}\\
      1 & \w^{2} & \w
      \end{smallmatrix}\right).$
\end{lemma}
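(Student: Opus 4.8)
The plan is to classify the fermionic quotient first: by Proposition \ref{mock S-matrix properties}, $\hat{\mcN}=\{\hat{N}_0,\hat{N}_1,\hat{N}_2\}$ is a commutative fusion rule of rank $3$, and since $\B$ is non-self-dual, duality acts nontrivially on $\Pi_0$. Because $0$ is fixed by ${}^*$ and $\Pi_0$ is closed under ${}^*$, the involution must swap $1$ and $2$; in particular $d_1 = d_2$. Writing the naive fusion matrices under this constraint ($\hat{N}_1 = (\hat{N}_2)^T$ with $\hat{N}_{1,2}^0 = \hat{N}_{2,1}^0 = 1$ and all other degree-0 entries zero), one gets a rank-$3$ commutative fusion rule with $\mbbZ_3$-duality. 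Standard low-rank fusion-rule arguments — comparing associativity via $X_1\ot X_1 \ot X_2$ computed two ways, using $\FPdim$ positivity, and the fact that the only rank-$3$ based ring with a fixed-point-free order-$3$ involution is the group ring $\mbbZ[\mbbZ_3]$ (this is where Remark \ref{naive fusion matrix rank 6} is replaced by its non-self-dual analogue) — force $\hat{N}_i$ to be the $\mbbZ_3$ cyclic group matrices, i.e. $X_1\ot X_1 \cong X_2$, $X_1\ot X_2 \cong \one$. The main obstacle is ruling out the possibility that the fermionic quotient carries a larger, non-group-like fusion rule (e.g. objects of dimension $>1$); I expect this to follow quickly because a rank-$3$ self-dual-free based ring whose Frobenius–Perron dimensions satisfy $1 + 2d_1^2 = \FPdim$ with $d_1 = d_2$ already forces $d_1 = 1$ after using that one of the entries in $\hat N_1$ must vanish by duality, but some care is needed to do this cleanly.

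Next I would pin down $\hat S$. Since $\hat{\mcN}$ is the $\mbbZ_3$ group ring, its characters are indexed by a primitive cube root of unity $\w$, and by Proposition \ref{mock S-matrix properties}(c), $\frac{\sqrt 2}{D}\hat S$ is a mock $S$-matrix: its columns, suitably normalized, realize these three characters. By Theorem \ref{t:1}(i), any two simultaneous diagonalizers differ by a permutation and column rescaling; combined with the fact (Proposition \ref{mock S-matrix properties}(a)) that $\hat S$ is symmetric with $\hat S \inv = \frac{2}{D^2}\ol{\hat S}$ and $\hat S_{0,j} = d_j = 1$ for all $j$, the row/column normalization is forced, and symmetry forces the particular arrangement
\[
\hat S = \begin{pmatrix} 1 & 1 & 1 \\ 1 & \w & \w^2 \\ 1 & \w^2 & \w \end{pmatrix}
\]
up to interchanging $\w \leftrightarrow \w^2$ (which amounts to relabeling $1 \leftrightarrow 2$). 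Here $D^2 = \dim(\B) = 6$ and $\hat S \ol{\hat S} = 3 I = \frac{D^2}{2} I$ as required.

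Finally I would promote this to a statement about the full $S$-matrix of $\B$. By \cite[Theorem 3.9]{16fold} (quoted in the excerpt), the unnormalized $S$-matrix of any super-modular category has the block form $\tS = \begin{pmatrix} \hat S & \hat S \\ \hat S & \hat S \end{pmatrix}$, which is exactly $\tS = \hat S \ot \left(\begin{smallmatrix} 1 & 1 \\ 1 & 1\end{smallmatrix}\right)$; substituting the $\hat S$ just determined finishes the proof. The only remaining bookkeeping is to confirm that the non-self-dual hypothesis is genuinely used — it is, precisely to exclude the self-dual rank-$3$ fusion rules handled in Lemma \ref{Lemma: Self-dual quotient} and to force the $\mbbZ_3$ structure — and that the freedom in relabeling $X_i \leftrightarrow fX_i$ (noted at the start of Subsection \ref{Mock S-matrix}) does not affect $\hat S$, so the conclusion is genuinely canonical up to the stated cube-root ambiguity.
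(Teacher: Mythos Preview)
Your proposal is correct and follows essentially the same route as the paper, which simply observes that the fermionic quotient is a non-self-dual rank $3$ fusion rule and then defers to \cite[Appendix A.1]{RSW}, noting that the only properties of $\hat{S}$ needed there are symmetry and projective unitarity (both supplied by Proposition \ref{mock S-matrix properties}). Your write-up is more self-contained---you classify the fusion rule first (forcing $\mbZ_3$) and then read off $\hat S$ as the character table---whereas RSW works more directly with the $S$-matrix constraints; but the ingredients and the outcome are the same. One small slip: ``fixed-point-free order-$3$ involution'' should read something like ``duality involution that is nontrivial on $\Pi_0\setminus\{0\}$''; and the step you flag as needing care (ruling out $d_1>1$) does go through cleanly via Frobenius reciprocity: with $X_1^*=X_2$ one gets $\hat N_{1,1}^1=\hat N_{1,2}^1$, and combining the two dimension equations forces $(b-a)(b+a)=1$ in non-negative integers, hence $a=0$, $b=1$, $d_1=1$.
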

\begin{proof}
Since $\B$ is a non-self dual, rank $6$ super-modular category, its fermionic quotient has a non-self dual, rank $3$ fusion rule. 

The rest of the proof proceeds in the same way as the analysis in \cite[Appendix A.1]{RSW}. It is important to remark that we use that the {$S$-matrix of the fermionic quotient} $\hat{S}$ is symmetric and (projectively) unitary by Proposition \ref{mock S-matrix properties}. 
\end{proof}

\begin{corollary}
If $\B$ is a non-self dual rank $6$ super-modular category then $\B\cong\mcP_{3}\boxtimes\sVec$ where $\mcP_{3}$ is a cyclic rank $3$ pointed  modular category.
\end{corollary}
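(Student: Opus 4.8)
The plan is to deduce from Lemma~\ref{non-self dual rank 6} that $\mcB$ is \emph{pointed}, and then to recognize it as the Deligne product of a pointed modular category with $\sVec$ via Theorem~\ref{graded_splits}. First, by Lemma~\ref{non-self dual rank 6} the first row of $\hat S$ is $(1,1,1)$, and since $\tS=\hat S\otimes\left(\begin{smallmatrix}1&1\\1&1\end{smallmatrix}\right)$ the first row of $\tS$ consists entirely of $1$'s; that is, every simple object $X$ of $\mcB$ has $d_X=1$. Since $\mcB$ is spherical we have $d_{X^*}=d_X=1$, so in $X\ot X^*\cong\one\op Y$ the complement $Y$ has $\dim Y=0$; as every simple summand of $Y$ has positive dimension, $Y=0$, so $X$ is invertible. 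Hence $\mcB$ is pointed of rank $6$.

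Let $G$ be the group of isomorphism classes of (invertible) simple objects of $\mcB$, so $|G|=6$. Because $\mcB$ is braided, the braiding gives $X_g\ot X_h\cong X_h\ot X_g$, i.e.\ $X_{gh}\cong X_{hg}$, so $gh=hg$; thus $G$ is abelian of order $6$, hence $G\cong\mathbb{Z}_6$. The fermion $f$ has $f^{\ot 2}\cong\one$ and $f\not\cong\one$, so $f$ is the unique element of order $2$ in $G$; in particular $f$ does not lie in the unique subgroup $H\le G$ of order $3$. Let $\mcP_3\subset\mcB$ be the (pointed, braided) fusion subcategory spanned by the invertible objects labelled by $H$; it has rank $3$.

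Now the homomorphism $G\to G/H\cong\mathbb{Z}_2$ equips $\mcB$ with a $\mathbb{Z}_2$-grading $\mcB=\mcB_0\op\mcB_1$ in which $X_g$ has degree $0$ exactly when $g\in H$; thus $\mcB_0=\mcP_3$, $\mcB_1=f\ot\mcP_3$, and $f\in\mcB_1$ because $f\notin H$. By Theorem~\ref{graded_splits}, $\mcB\cong\mcP_3\boxtimes\sVec$ with $\mcP_3$ modular. Finally, $\mcP_3$ is a rank-$3$ pointed modular category, so its group of invertibles is $\mathbb{Z}_3$, i.e.\ $\mcP_3$ is a cyclic rank-$3$ pointed modular category (one of the $\mathbb{Z}_3$ theories $SU(3)_1$ up to fusion rules), which completes the argument.

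I do not anticipate a real obstacle: the substantive step is realizing that $\mcB$ is pointed, after which the argument is routine group bookkeeping in $\mathbb{Z}_6$. The only point meriting care is that $f$ must lie in the nontrivial graded piece $\mcB_1$, that is, $f\notin H$, which holds because $H$ has odd order while $f$ has order $2$; this is exactly what makes Theorem~\ref{graded_splits} applicable. Alternatively, one could bypass Theorem~\ref{graded_splits} and invoke Lemma~\ref{lemma: factorization}: since $\FPdim(\mcP_3)=3=\FPdim(\mcB)/\FPdim(\mcB')$ and $\mcB'=\sVec=\langle f\rangle$ meets $\mcP_3$ trivially (so $C_{\mcB}(\mcP_3)=\mcB'$ and $\mcP_3$ is nondegenerate), one again gets $\mcB\cong\mcP_3\boxtimes\mcB'$; the grading route is the cleaner one.
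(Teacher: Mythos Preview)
Your proof is correct and follows essentially the same approach as the paper: deduce pointedness from the first row of $\hat S$ in Lemma~\ref{non-self dual rank 6}, identify a rank-$3$ fusion subcategory on which the $\mathbb{Z}_2$-grading is trivial while $f$ lies in the nontrivial component, and apply Theorem~\ref{graded_splits}. The only difference is cosmetic: the paper works out $X_2^{\otimes 2}\in\{X_1,fX_1\}$ by hand and relabels, whereas you package the same step as group theory in $G\cong\mathbb{Z}_6$, which is a bit cleaner.
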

\begin{proof}
 Recall that at the beginning of this section, we arrange a (non-canonical) partition of the simple objects $\Pi$ into two sets $\Pi_0\bigsqcup f\Pi_0$ so that $\one\in\Pi_0$ and if $X\in\Pi_0$ then so is $X^*$. 
 Then, ${X_1^*} = X_2$. Since $X_2$ is non-self dual and  $X_2^* \neq f\otimes X_1$ then $X_2^{\otimes 2} \neq \one, f$. Therefore $X_2^{\otimes 2} = X_1$ or $fX_1$.  In the latter case we may simultaneously interchange $fX_2\leftrightarrow X_2$ and $fX_1\leftrightarrow X_1$ to reduce to $X_2^{\otimes 2} = X_1$.
 
 
 In this case the rank $3$ subcategory $\mcB_0$ generated by $X_2,X_1$ and $\one$ is modular by Lemma \ref{non-self dual rank 6}. Since its centralizer is $\sVec$ and {$f\in \B_1$} the result now follows from Theorem \ref{graded_splits}.
 
\end{proof}

\section{Low Rank Spin Modular Categories}
A \textbf{spin} modular category is a modular category $\mcC$ containing a fermion \cite{16fold}.
The key outstanding conjecture for super-modular categories is the following \cite[Conjecture 3.14]{16fold}:
\begin{conj}\label{GFPC}
Any super-modular category $\B$ is a ribbon subcategory of a (spin) modular category $\mcC$ with $\dim(\mcC)=2\dim(\B)$. 
\end{conj}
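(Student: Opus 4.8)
The plan is to separate the split and non-split cases, dispatch the split case and the known examples directly, and reduce the general case to the vanishing of a single obstruction. If $\B$ is split, say $\B\cong\mcD\boxtimes\sVec$ with $\mcD$ modular, then I would fix an Ising modular category $\mcI$ and set $\mcC:=\mcD\boxtimes\mcI$. This $\mcC$ is modular, it contains $\B\cong\mcD\boxtimes\sVec$ as a ribbon subcategory via the braided embedding $\sVec\hookrightarrow\mcI$ identifying $\sVec$ with the subcategory $\langle\psi\rangle$ generated by the Ising fermion $\psi$ (which indeed carries the $\sVec$ braiding, since $\theta_\psi=-1$), and $\dim(\mcC)=4\dim(\mcD)=2\dim(\B)$ because $\dim(\B)=2\dim(\mcD)$; one checks moreover that $C_\mcC(\sVec)=\mcD\boxtimes\langle\psi\rangle\cong\B$, so this is the right kind of extension. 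I would also record the structural fact from \cite{16fold} that, once a single minimal modular extension $\mcC$ of a given $\B$ is available, the full $\mathbb{Z}_{16}$-family is produced from it by condensing the boson $f\boxtimes f'$ inside $\mcC\boxtimes\mcI'$ as $\mcI'$ ranges over the minimal modular extensions of $\sVec$; so it suffices to build one extension of each $\B$.

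The content of the conjecture is therefore the non-split case. For the non-split examples in hand there is nothing to do: writing $f$ for the top (invertible) object of $SU(2)_{4k+2}$, one has $\theta_f=-1$ (from the conformal weights) and $\dim(SU(2)_{4k+2})=2\dim(PSU(2)_{4k+2})$ (from the Verlinde dimension formula), and $PSU(2)_{4k+2}=C_{SU(2)_{4k+2}}(f)$ --- the inclusion $\subseteq$ holds because every object of the super-modular category $PSU(2)_{4k+2}$ centralizes its own M\"uger-center object $f$, and the dimensions then force equality --- so the modular category $SU(2)_{4k+2}$ is a minimal (spin) modular extension. Combined with the split case, the rank $\le 6$ classification above verifies Conjecture \ref{GFPC} for all super-modular categories of rank $\le 6$, and the spin modular analysis of this section covers spin modular categories of rank $\le 11$; I regard these low-rank cases as the natural base step, while the general statement needs genuinely new input.

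For an arbitrary $\B$ the approach I would take is obstruction-theoretic, through gauging and anyon condensation. A minimal modular extension $\mcC\supset\B$ with $C_\mcC(\B)=\sVec$ should be reconstructible from the fermionic quotient data $(\Pi_0,\hat S,\hat{\mcU}_\B)$ of Proposition \ref{mock S-matrix properties}: concretely I would (1) show that the fusion ring $\hat{\mcU}_\B$ admits a braided categorification realizing the mock $S$-matrix $\hat S$; (2) build from it, together with the $\mathbb{Z}_2$-action encoding $f$, a $\mathbb{Z}_2$-crossed braided category whose equivariantization is a modular category $\mcC$ with $C_\mcC(\sVec)=\B$; (3) identify the residual obstruction to modularity (equivalently, to existence) with a class in a suitable cohomology group, equivalently with the vanishing of a Witt-theoretic invariant of $\B$; and (4) prove that this class is always trivial. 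Step (4) is the main obstacle, and it is precisely the part that is open: there is at present no uniform reason for the obstruction to vanish, and one expects a proof will require either a new general construction of modular extensions or a classification-free structural result about the super-Witt group. A realistic intermediate goal is to settle the conjecture when the fermionic quotient is pointed, or more generally weakly group-theoretical, where the gauging in steps (2)--(3) can be performed explicitly.
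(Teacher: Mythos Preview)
The statement you are attempting to prove is labeled and stated in the paper as a \textbf{Conjecture}, not a theorem: it is an open problem, and the paper provides no proof of it. There is therefore nothing to compare your argument against. What the paper does do with Conjecture \ref{GFPC} is (i) observe that it would imply rank-finiteness for premodular categories (via Lemma \ref{rankbounds} and \cite{BNRW}), and (ii) verify it implicitly in low rank by classifying super-modular categories of rank $\le 6$ and exhibiting minimal modular extensions for each (Theorem \ref{Spin Class} and \cite[Section III.G]{16fold}).

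Your treatment of those low-rank cases is correct and matches what the paper uses: the split case via $\mcD\boxtimes\mcI$ with an Ising $\mcI$, and the non-split $PSU(2)_{4k+2}$ case via $SU(2)_{4k+2}$, are exactly the extensions invoked in the proof of Theorem \ref{Spin Class}. So as a verification of the conjecture for rank $\le 6$ your write-up is fine.

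For the general statement, however, your outline is not a proof and contains a structural misstep in addition to the acknowledged gap at step (4). Step (1) asks that the naive fusion ring $\hat{\mcU}_\B$ admit a braided categorification realizing $\hat S$; but $\hat{\mcU}_\B$ is only a fusion \emph{algebra} with a mock $S$-matrix, and there is no reason to expect it to categorify --- indeed, the paper's own final conjecture suggests that when $\hat S$ \emph{is} the $S$-matrix of an actual modular category, $\B$ should already be split. The correct formulation of the obstruction (in the spirit of \cite{KLW} and subsequent work) is via $\mbZ_2$-crossed braided extensions of $\B$ itself, not of a hypothetical categorification of the quotient; the relevant obstruction class lives in a certain $H^3$, and its vanishing for arbitrary super-modular $\B$ is precisely what remains open. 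Your step (4) is thus not a residual technicality but the entire content of the conjecture.
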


For a super-modular category $\B$, a modular category $\mcC$ satisfying these conditions is called a \textbf{minimal modular extension} of $\B$, and 
the results of \cite{KLW} imply that if one minimal modular extension exists for a given super-modular category $\B$ then there are precisely $16$ of them.  One consequence of this conjecture would be rank-finiteness for super-modular, and hence premodular categories.  To see this, we first prove a lemma, the idea of which came from Bonderson (see \cite{BCT,BRWZ}).

Let $\mcC$ be a spin modular category with fermion $f$.  The $\mbZ_2$ grading afforded by $\langle f\rangle\cong \sVec$ will be denoted $\mcC_0\oplus \mcC_1$, where $\mcC_0$ is super-modular with minimal modular extension $\mcC$.  {Since $f \in \mcC_0$,} a further refinement of $\mcC_1$ can be obtained by defining $\mcC_v\subset\mcC_1$ to be the abelian subcategory generated by simple objects $X$ with $f\ot X\not\cong X$ and $\mcC_\sigma\subset\mcC_1$ the abelian subcategory {generated by simple objects} $X$ with $f\ot X\cong X$. The following may be derived from \cite{BRWZ}, but we provide a proof for completeness:

\begin{lemma}\label{rankbounds} Let $(\mcC,f)$ be a spin modular category with $\mcC_0$, $\mcC_v$ and $\mcC_\sigma$ as above, {and their ranks denoted by $|\mcC_0|, |\mcC_v|, |\mcC_\sigma|$  respectively}. Then:
\begin{enumerate}
    \item[(a)] $|\mcC_0|=|\mcC_v|+2|\mcC_\sigma|$, in particular $|\mcC|=2|\mcC_0|-|\mcC_\sigma|$.
    \item[(b)] $3|\mcC_0|/2\leq |\mcC|\leq 2|\mcC_0|$.
    \item[(c)] $|\mcC_v|$ and $|\mcC_0|$ are even.
\end{enumerate}
\end{lemma}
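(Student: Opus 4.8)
The plan is to exploit the action of the fermion $f$ on the simple objects of $\mcC_1$ together with basic modular-category structure. First I would establish part (a). Since $f^{\ot 2}\cong\one$, tensoring with $f$ is an involution on the set of isomorphism classes of simple objects of $\mcC$ preserving the $\mbZ_2$-grading, and in particular it permutes $\IrrC_1$. By definition, $\mcC_\sigma$ collects the $f$-fixed simples in $\mcC_1$ and $\mcC_v$ the non-fixed ones, so $\IrrC_1$ is partitioned into the $|\mcC_\sigma|$ fixed points and $|\mcC_v|/2$ free orbits of size $2$; hence $|\mcC_1|=|\mcC_v|+|\mcC_\sigma|$. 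On the other hand, the adjoint (de-)equivariantization picture — or more elementarily the fact that in a spin modular category the full subcategory $\mcC_0$ is super-modular with Müger center $\langle f\rangle\cong\sVec$, so $f$ acts freely on $\IrrC_0$ (the argument already used in Subsection~\ref{Mock S-matrix}: if $fX\cong X$ for $X\in\mcC_0$ then $\theta$ and its negative would have to agree) — shows $|\mcC_0|$ is even and, comparing the grading, that $|\mcC|=|\mcC_0|+|\mcC_1|$. The relation $|\mcC_0|=|\mcC_v|+2|\mcC_\sigma|$ I would get from a dimension/balance count: the objects of $\mcC_\sigma$ are precisely the ``$f$-fixed'' simples, which by a standard argument (the fixed-point subobject decomposition under the $\mbZ_2$-action induced by $f$, cf.\ \cite{BRWZ}) are in bijection with \emph{pairs} of simples of $\mcC_0$ — concretely, each $X\in\mcC_\sigma$ appears in $Y\ot f'$ for a fermion-type analysis — giving a $2$-to-$1$ correspondence that, combined with $|\mcC_1|=|\mcC_v|+|\mcC_\sigma|$ and $|\mcC_0|=|\mcC_1|$ (both halves of the faithful $\mbZ_2$-grading have equal rank is \emph{false} in general, so instead) I would use the cleaner route below.

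Cleaner route: the faithful $\mbZ_2$-grading by $f$ gives $|\mcC_0|=|\mcC_1|$? No — that equality fails (e.g.\ Ising). So instead I would directly count via the centralizer $C_\mcC(\langle f\rangle)=\mcC_0$ and the formula $\dim(\mcC_0)=\dim(\mcC)/2$, but since we only track ranks, the right tool is the $f$-action on $\IrrC$: it is free on $\mcC_0$ (parity of twists), so $|\mcC_0|$ is even, proving (c)'s first half. On $\mcC_1$ it has $|\mcC_\sigma|$ fixed points. The key identity $|\mcC_0|=|\mcC_v|+2|\mcC_\sigma|$ then comes from the ``gauging'' bijection of \cite{BRWZ}: condensing $f$ (de-equivariantization by $\langle f\rangle$) sends $\mcC$ to a modular category $\widetilde{\mcC}$ of rank related to the orbit data — each free $f$-orbit in $\mcC_0$ becomes one object, each free $f$-orbit in $\mcC_1$ ($|\mcC_v|/2$ of them) becomes one object, and each fixed point in $\mcC_\sigma$ \emph{splits} into two objects — and the resulting count must match on the two gradings after condensation, forcing $|\mcC_0|/2 = |\mcC_v|/2 + |\mcC_\sigma|$, i.e.\ $|\mcC_0|=|\mcC_v|+2|\mcC_\sigma|$. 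Substituting into $|\mcC|=|\mcC_0|+|\mcC_v|+|\mcC_\sigma| = |\mcC_0| + (|\mcC_0|-2|\mcC_\sigma|) + |\mcC_\sigma| = 2|\mcC_0|-|\mcC_\sigma|$ gives the second assertion of (a).

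For (b), I would simply note $0\le |\mcC_\sigma|$ gives the upper bound $|\mcC|\le 2|\mcC_0|$, while $|\mcC_\sigma|\le |\mcC_0|/2$ — which is immediate from $|\mcC_0|=|\mcC_v|+2|\mcC_\sigma|\ge 2|\mcC_\sigma|$ — plugged into $|\mcC|=2|\mcC_0|-|\mcC_\sigma|$ gives $|\mcC|\ge 2|\mcC_0|-|\mcC_0|/2 = 3|\mcC_0|/2$. For (c), the evenness of $|\mcC_0|$ is the free $f$-action argument above; the evenness of $|\mcC_v|$ is immediate since $\mcC_v$ is by definition a union of free size-$2$ $f$-orbits. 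The main obstacle is part (a)'s identity $|\mcC_0|=|\mcC_v|+2|\mcC_\sigma|$: one must argue carefully that fixed simples of $\mcC_1$ correspond two-to-one with simples of $\mcC_0$, which is where the de-equivariantization/condensation bookkeeping of \cite{BRWZ} is genuinely needed rather than just orbit counting — everything else is elementary orbit arithmetic and inequalities.
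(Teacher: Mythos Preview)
Your treatment of (b) and (c) is fine and essentially matches the paper: once (a) is in hand, the inequalities and parities are immediate orbit arithmetic. The genuine gap is in your argument for the key identity $|\mcC_0|=|\mcC_v|+2|\mcC_\sigma|$. Your condensation sketch asserts that after de-equivariantizing by $\langle f\rangle$ one obtains a $\mbZ_2$-graded (modular) category in which ``the resulting count must match on the two gradings,'' but neither claim is justified. First, $\langle f\rangle\cong\sVec$ is \emph{not} Tannakian, so the usual de-equivariantization producing a modular category does not apply; what one gets is at best a braided $\mbZ_2$-crossed or ``super'' structure, and you would have to say precisely which one and why it carries the grading you use. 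Second, and more seriously, even granting some $\mbZ_2$-grading on the condensed theory, there is no general reason the two graded pieces should have equal rank (your own Ising example shows graded components need not have matching ranks). Finally, your orbit count on the $\mcC_1$ side is inconsistent: if each fixed simple in $\mcC_\sigma$ really ``splits into two,'' the $\mcC_1$ contribution becomes $|\mcC_v|/2+2|\mcC_\sigma|$, not $|\mcC_v|/2+|\mcC_\sigma|$, and equating with $|\mcC_0|/2$ gives the wrong formula. So as written the core step is a gap rather than a proof.

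The paper takes a completely different and much lighter route that avoids condensation entirely. One writes the modular $S$-matrix in block form with respect to the ordered partition $\Pi_0\sqcup f\Pi_0\sqcup\Pi_v\sqcup f\Pi_v\sqcup\Pi_\sigma$, using only the identities $\tS_{fX,Y}=\tS_{X,Y}$ for $Y\in\mcC_0$ and $\tS_{fX,Y}=-\tS_{X,Y}$ for $Y\in\mcC_1$ (and $\tS_{Z,Z'}=0$ for $Z\in\Pi_\sigma$, since $\tS_{Z,Z'}=\tS_{fZ,Z'}=-\tS_{Z,Z'}$). From this block form one sees directly that the invertible operator $S$ interchanges the subspace spanned by $\{X_i-fX_i:X_i\in\Pi_0\}$ with the subspace spanned by $\{Y_j+fY_j:Y_j\in\Pi_v\}\cup\Pi_\sigma$. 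Equating dimensions gives $|\mcC_0|/2=|\mcC_v|/2+|\mcC_\sigma|$ immediately. This is pure linear algebra using only unitarity of $S$ and the sign behaviour under tensoring with $f$; I would recommend replacing the condensation heuristic with this argument.
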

\begin{proof}
We only prove $(a)$; the other two statements follow directly.

We partition the basis $\Pi=\Pi_0\bigsqcup f\Pi_0\bigsqcup \Pi_v\bigsqcup f\Pi_v\bigsqcup\Pi_\sigma$ for the Grothendieck ring of $\mcC$ so that $\mcC_0$ has basis $=\Pi_0\bigsqcup f\Pi_0$, $\mcC_v$ has basis $\Pi_v\bigsqcup f\Pi_v$ and $\mcC_\sigma$ has basis $\Pi_\sigma$.  With respect to this ordered basis the $S$ matrix of $\mcC$ has the block form:$$S=\begin{pmatrix}\frac{1}{2}\hat{S} & \frac{1}{2}\hat{S} & A&A &X\\
\frac{1}{2}\hat{S} & \frac{1}{2}\hat{S} & -A &-A &-X\\
A^T&-A^T & B& -B&0\\
A^T&-A^T &-B & B &0\\
X^T &-X^T &0 &0 &0
\end{pmatrix}$$
From this form we see that  $S$ maps the {subspace spanned by} the linearly independent set $\{X_i-f X_i: X_i\in\Pi_0\}$ bijectively to {the subspace spanned by} $\{Y_i+f Y_i: Y_i\in\Pi_v\}\cup \{Z_i\in\Pi_\sigma\}$.  The {dimension of the first subspace} is $|\mcC_0|/2$ whereas the latter two have  {dimensions} $|\mcC_v|/2$ and $|\mcC_\sigma|$ respectively, proving $(a)$.
\end{proof}

Now Conjecture \ref{GFPC} implies:

\begin{conj}
There are finitely many premodular categories of rank $r$.
\end{conj}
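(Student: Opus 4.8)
The plan is to deduce rank-finiteness for premodular categories from Conjecture \ref{GFPC} by combining three ingredients: (1) the de-equivariantization reduction of a general premodular category to a modular or super-modular one, (2) the minimal modular extension supplied by the conjecture, and (3) the rank bounds of Lemma \ref{rankbounds} together with the known rank-finiteness for modular categories \cite{BNRW}. The first step is to observe that for a premodular category $\mcB$ of rank $r$, its M\"uger center $\mcB'$ is a symmetric fusion category, hence by Deligne's theorem of the form $\operatorname{Rep}(G)$ or $\operatorname{Rep}(G,z)$ for a finite group $G$ with $|G|\le \dim(\mcB')\le$ (a bound controlled by $r$, since the simple objects of $\mcB'$ are among the $r$ simple objects of $\mcB$ and their dimensions are bounded). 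De-equivariantizing by the maximal Tannakian subcategory $\operatorname{Rep}(G)\subseteq\mcB'$ yields a $G$-crossed braided fusion category whose trivial component $\mcB_G^0$ is either modular or super-modular; moreover the rank of $\mcB_G^0$ is bounded by a function of $r$ and $|G|$, hence by a function of $r$ alone. So it suffices to bound the rank of a super-modular category in terms of its Frobenius--Perot dimension or, better, to show there are finitely many super-modular categories of each rank.

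Next, given a super-modular category $\mcB_0$ of rank $n$, Conjecture \ref{GFPC} provides a spin modular category $\mcC$ with $\dim(\mcC)=2\dim(\mcB_0)$ and $\mcB_0\subseteq\mcC$ as the trivial component of the $\mbZ_2$-grading by $\langle f\rangle$. Applying Lemma \ref{rankbounds}(b), $|\mcC|\le 2|\mcC_0|=2n$, so $\mcC$ is a modular category of rank at most $2n$. By rank-finiteness for modular categories \cite{BNRW}, there are only finitely many modular categories (up to equivalence) of rank $\le 2n$; since each such $\mcC$ contains only finitely many fusion subcategories, and $\mcB_0$ is recovered as the centralizer $C_\mcC(\langle f\rangle)$ once a fermion $f\in\mcC$ is chosen (of which there are finitely many), we conclude there are finitely many super-modular categories of rank $n$. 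Combining with the de-equivariantization reduction of the first paragraph and the fact that a premodular category is determined up to finitely many choices by the pair $(\operatorname{Rep}(G)\subseteq\mcB',\ \mcB_G^0)$ together with the $G$-action and associated cohomological data (all finite once the ranks are bounded), we obtain finitely many premodular categories of rank $r$.

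I expect the main obstacle to be making the de-equivariantization step fully rigorous and uniform: one must verify that the rank of the modular or super-modular category $\mcB_G^0$ obtained from a rank-$r$ premodular $\mcB$ is genuinely bounded by a function of $r$, and that only finitely many $\mcB$ can produce a given $\mcB_G^0$ — this requires controlling the equivariantization data ($G$, the $G$-action on $\mcB_G^0$, and the relevant obstruction/extension classes in $H^*(G,-)$) by quantities depending only on $r$. The structural theorem that every premodular category arises this way is cited as \cite{BNRW,16fold}; the finiteness of the reconstruction data, given bounded ranks, follows from general finiteness of fusion-categorical data over finite groups, but should be stated carefully. The modular-category input \cite{BNRW} and Lemma \ref{rankbounds} are the load-bearing external facts, and the argument is essentially a bookkeeping of finiteness through these two reductions.
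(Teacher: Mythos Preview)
Your proposal is correct and follows essentially the same route as the paper: reduce premodular rank-finiteness to the super-modular case, then use Conjecture~\ref{GFPC} together with Lemma~\ref{rankbounds}(b) to embed any rank-$n$ super-modular category in a modular category of rank at most $2n$, and invoke \cite{BNRW}. The paper's argument is two sentences: it simply cites \cite{BNRW} (and implicitly \cite{16fold}) for the reduction ``premodular $\Rightarrow$ super-modular'' as a black box, whereas you unpack that step and worry about bounding $|G|$, the rank of $\mcB_G^0$, and the finiteness of the equivariantization data. Those concerns are legitimate in principle but are already handled in the cited literature, so your expanded first paragraph is more detail than the paper supplies rather than a genuinely different approach.
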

Indeed, it is enough to show there are finitely many super-modular categories by \cite{BNRW}.  If Conjecture \ref{GFPC} holds then every super-modular category of rank $r$ is a subcategory of a modular category of rank at most $2r$, of which there are finitely many by the main result of \cite{BNRW}.

\subsection{Classification of Spin Modular Categories of Rank$\leq 11$}

From the classification of super-modular categories of rank$\leq 6$ we obtain a classification of spin modular categories of rank at most $11$.  First we determine the ranks of components $\mcC_0$, $\mcC_v$ and $\mcC_\sigma$:

\begin{lemma}
Let $(\mcC,f)$ be a spin modular category and $\mcC_0$ the corresponding super-modular category.
\begin{enumerate}
    \item[(a)] If $|\mcC|=3,4$ or $5$ then $|\mcC_0|=2$.  Moreover, $\mcC\cong SO(N)_1$ in this case (and in particular $|\mcC|\neq 5$).
    \item[(b)] If $|\mcC|=6,7$ or $8$, then $|\mcC_0|=4$ and $(|\mcC_v|,|\mcC_\sigma|)=(0,2),(2,1)$ and $(4,0)$ respectively.
    \item[(c)] If $|\mcC|=9,10$ or $11$, then $|\mcC_0|=6$, and $(|\mcC_v|,|\mcC_\sigma|)=(0,3),(2,2)$ and $(4,1)$ respectively.
\end{enumerate}
\end{lemma}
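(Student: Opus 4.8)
The plan is to reduce essentially everything to the numerics of Lemma~\ref{rankbounds}. Abbreviate $n_0=|\mcC_0|$, $n_v=|\mcC_v|$, $n_\sigma=|\mcC_\sigma|$; that lemma supplies $n_0=n_v+2n_\sigma$, $|\mcC|=2n_0-n_\sigma$, the estimate $\tfrac32 n_0\le|\mcC|\le 2n_0$ (equivalently $\tfrac{|\mcC|}{2}\le n_0\le\tfrac{2|\mcC|}{3}$), and the parity facts that $n_0$ and $n_v$ are even. First I would run through $|\mcC|=3,\dots,11$ one value at a time: the interval $[\tfrac{|\mcC|}{2},\tfrac{2|\mcC|}{3}]$ contains at most two integers, and the parity of $n_0$ singles out exactly one even value — except when $|\mcC|=5$, where the only integer in $[\tfrac52,\tfrac{10}{3}]$ is $3$, which is odd; this rules out rank $5$ outright. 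Once $n_0$ is fixed one reads off $n_\sigma=2n_0-|\mcC|$ and then $n_v=n_0-2n_\sigma$, which yields $n_0=2$ for $|\mcC|\in\{3,4\}$, $n_0=4$ with $(n_v,n_\sigma)=(0,2),(2,1),(4,0)$ for $|\mcC|=6,7,8$, and $n_0=6$ with $(n_v,n_\sigma)=(0,3),(2,2),(4,1)$ for $|\mcC|=9,10,11$. Thus (b), (c), and the rank bookkeeping in (a) are immediate.

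The one substantive point is the ``Moreover'' of (a): when $n_0=2$, identifying $\mcC$. Here I would observe that a rank-$2$ super-modular category equals its own M\"uger center, since $\sVec\simeq\mcC_0'\subseteq\mcC_0$ forces $\mcC_0\cong\sVec$ by rank; and since $\dim\mcC=\dim\langle f\rangle\cdot\dim C_\mcC(\langle f\rangle)=2\dim\mcC_0$ by M\"uger's centralizer formula in a modular category, $\mcC$ is a minimal modular extension of $\sVec$. By the sixteenfold way (\cite{16fold}, after \cite{KLW}), these extensions are precisely the sixteen categories $SO(N)_1$ — the eight with $N$ odd of rank $3$, the eight with $N$ even of rank $4$ — which gives $\mcC\cong SO(N)_1$ and re-confirms $|\mcC|\ne 5$. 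A self-contained alternative for $|\mcC|=3$ is to quote the rank-$3$ classification of \cite{RSW}: among $SU(3)_1$, $SU(2)_2$, and $PSU(2)_5$, only Ising fusion rules admit a simple object of dimension $1$ and twist $-1$, and Ising-type modular categories are exactly the $SO(N)_1$ with $N$ odd; for $|\mcC|=4$ one can instead count dimensions, since $\sum d_i^2=\dim\mcC=4$ over four simples, two of dimension $1$, forces all four to be invertible, so $\mcC$ is pointed of rank $4$, and the pointed modular categories of rank $4$ containing a fermion are exactly $SO(N)_1$ with $N$ even.

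I do not anticipate a genuine obstacle. The only things to be careful about are keeping both parity inputs of Lemma~\ref{rankbounds}(c) in play (without the evenness of $n_0$ neither rank $5$ in (a) nor the odd candidate ranks in (b)--(c) would be excluded) and disposing of $|\mcC|=5$ before attempting to classify a category that does not exist. The ``Moreover'' clause is the only place a nontrivial classification input is invoked, and it can be sourced either from the sixteenfold way or, for ranks $3$ and $4$ separately, from \cite{RSW} together with the identification of Ising-type and of pointed rank-$4$ modular categories with $SO(N)_1$.
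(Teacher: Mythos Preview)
Your argument is correct and essentially identical to the paper's: both reduce the numerics to Lemma~\ref{rankbounds} (bounds plus parity) and then invoke the classification of dimension-$4$ spin modular categories for the ``Moreover'' in (a). The paper cites Kitaev~\cite{kitaev} directly for that last step, whereas you route through the minimal-modular-extension language and cite~\cite{16fold,KLW} (or alternatively~\cite{RSW}); these are the same classification, so there is no substantive difference.
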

\begin{proof}
The upper and lower bounds from Lemma \ref{rankbounds}(b) provide the value of $|\mcC_0|$ immediately.  Kitaev's \cite{kitaev} classification of spin modular categories of dimension $4$ finishes (a).  For the remaining statements use Lemma \ref{rankbounds}(a).
\end{proof}

In \cite[Section III.G]{16fold} all $16$ minimal modular extesnions of $PSU(2)_{4k+2}$ are explicitly constructed. Combining with the results above, we have:

\begin{theorem}\label{Spin Class}
Let $\mcC$ be a spin modular category of rank $|\mcC|\leq 11$.  Then either,
\begin{enumerate}
\item[(a)] {$\mcC\cong \mcD \boxtimes SO(N)_1$ for some postive integer $N\leq 16$ and $\mcD$ a modular category with $|\mcD| \le 3$} or,
\item [(b)] $|\mcC|=7$, and $\mcC$ is Grothendieck equivalent to one of the $16$ minimal modular extensions of $PSU(2)_6$ described in \cite[Section III.G]{16fold} or
\item[(b)] $|\mcC|=10$ or $11$ and $\mcC$ is Grothendieck equivalent to one of the $16$ minimal modular extensions of $PSU(2)_{10}$ described in \cite[Section III.G]{16fold}.
\end{enumerate}
\end{theorem}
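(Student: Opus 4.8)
The plan is to reduce the classification of rank $\leq 11$ spin modular categories to the already-established classification of super-modular categories of rank $\leq 6$, by exploiting the $16$-fold way. Given a spin modular category $\mcC$ with fermion $f$, the even part $\mcC_0$ is super-modular with $\mcC$ as a minimal modular extension. By the preceding lemma, $|\mcC_0| = 2$ when $|\mcC|\in\{3,4,5\}$, $|\mcC_0|=4$ when $|\mcC|\in\{6,7,8\}$, and $|\mcC_0|=6$ when $|\mcC|\in\{9,10,11\}$. In each case I would invoke the rank-$\leq 6$ super-modular classification (the theorem of the previous section together with the split classification via \cite{RSW}) to pin down $\mcC_0$ up to Grothendieck equivalence, and then argue that the fusion rules of $\mcC$ are determined by those of $\mcC_0$ together with the structure of $\mcC_v$ and $\mcC_\sigma$.

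The argument splits along the possible $\mcC_0$. If $|\mcC_0|=2$ then $\mcC_0\cong\sVec$, so $\mcC$ is a minimal modular extension of $\sVec$; by Kitaev's classification these are exactly the $SO(N)_1$, $1\le N\le 16$, which gives (a) with $\mcD\cong\Vec$ (and rules out $|\mcC|=5$). If $|\mcC_0|=4$, then $\mcC_0$ is either split — $\mcC_0\cong\mcE\boxtimes\sVec$ for $\mcE$ a rank-$2$ modular category (Semion or Fibonacci) — or non-split, hence Grothendieck equivalent to $PSU(2)_6$. In the split case the $16$ minimal modular extensions of $\mcE\boxtimes\sVec$ are $\mcE\boxtimes SO(N)_1$, yielding (a); this is because minimal modular extensions factor through the Deligne product when the super-modular category does, by the uniqueness count of \cite{KLW} combined with $\mcE$ being modular. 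In the non-split case we get (b): $\mcC$ is Grothendieck equivalent to one of the $16$ minimal modular extensions of $PSU(2)_6$ explicitly built in \cite[Section III.G]{16fold}, and the rank count forces $|\mcC|=7$ (since $|\mcC|=2|\mcC_0|-|\mcC_\sigma|$ and the extension is non-split so $\mcC\ne\mcC_0\boxtimes\sVec$, but I must check $|\mcC|=6$ and $|\mcC|=8$ cannot arise for a non-split $PSU(2)_6$ — in fact $|\mcC_\sigma|$ is determined to be $1$ for these extensions). Similarly if $|\mcC_0|=6$: either $\mcC_0$ is split ($\mcE\boxtimes\sVec$ with $\mcE$ rank-$3$ modular, giving (a) with $|\mcD|\le 3$, after checking $N\le 16$) or non-split, hence Grothendieck equivalent to $PSU(2)_{10}$, giving (c) with $|\mcC|\in\{10,11\}$.

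The key external inputs I would cite are: the rank-$\leq 6$ super-modular classification proved above; the explicit construction of all $16$ minimal modular extensions of $PSU(2)_{4k+2}$ in \cite[Section III.G]{16fold}, which supplies the Grothendieck equivalence classes in cases (b) and (c); the result of \cite{KLW} that minimal modular extensions, when they exist, form a torsor of size $16$ (so that it suffices to know the extensions of $PSU(2)_6$ and $PSU(2)_{10}$ are realized and then all such extensions are Grothendieck-equivalent to the listed ones); and Kitaev's rank-$4$ spin classification for the base case. The split super-modular classification of ranks $2$ and $4$ reduces to \cite{RSW}'s modular classification of ranks $\le 3$.

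The main obstacle I anticipate is the bookkeeping in the non-split subcases — specifically, showing that a non-split $\mcC_0$ of rank $4$ or $6$ forces the exact rank of $\mcC$ to be $7$, or $10$ or $11$ respectively, and no other value in the admissible window $[3|\mcC_0|/2, 2|\mcC_0|]$. This amounts to determining $|\mcC_\sigma|$ for the minimal modular extensions of $PSU(2)_6$ and $PSU(2)_{10}$; one must verify that among the $16$ extensions of $PSU(2)_6$ exactly those with a single $f$-fixed simple object in the odd part (i.e. $|\mcC_\sigma|=1$, hence $|\mcC|=7$) occur, and similarly that extensions of $PSU(2)_{10}$ all have $|\mcC_\sigma|\in\{1,2\}$, giving $|\mcC|\in\{10,11\}$ — equivalently, that no minimal modular extension of a non-split $PSU(2)_{4k+2}$ is itself split. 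This can be extracted from the explicit data of \cite[Section III.G]{16fold} (the odd-part objects and their dimensions are listed there), so the obstacle is one of careful reference-chasing rather than new mathematics; a secondary point requiring care is justifying that minimal modular extensions respect the Deligne product decomposition in the split case, which follows because if $\mcC_0\cong\mcE\boxtimes\sVec$ with $\mcE$ modular then any minimal modular extension contains $\mcE$ as a modular subcategory and hence factors as $\mcE\boxtimes(\text{min.\ mod.\ ext.\ of }\sVec)$ by \cite[Theorem 4.2]{M2}.
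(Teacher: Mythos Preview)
Your overall strategy matches the paper's exactly: pass to the super-modular even part $\mcC_0$, invoke the rank-$\leq 6$ classification, and in the split case use the $16$-fold count together with the product extensions $\mcD\boxtimes SO(N)_1$ to conclude those are all of them. Your justification of the split case via M\"uger's factorization (\cite[Theorem 4.2]{M2}) is a pleasant alternative to the bare counting argument the paper uses, but the logical content is the same.

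There is one genuine gap in your non-split case. You argue: $\mcC_0$ is Grothendieck equivalent to $PSU(2)_{4k+2}$, the \cite{KLW} torsor result gives $16$ minimal modular extensions of $\mcC_0$, and \cite[Section III.G]{16fold} gives $16$ minimal modular extensions of $PSU(2)_{4k+2}$; hence $\mcC$ is Grothendieck equivalent to one of the latter. But the torsor of minimal modular extensions is attached to a \emph{specific} super-modular category, not to a Grothendieck equivalence class. If $\mcC_0$ is merely Grothendieck equivalent (not equivalent) to $PSU(2)_{4k+2}$, there is no a priori reason its $16$ extensions should be Grothendieck equivalent to the $16$ extensions of $PSU(2)_{4k+2}$.

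The paper closes this gap by using the \emph{stronger} part of the super-modular classification you cited: a non-split $\mcC_0$ of rank $4$ or $6$ is not just Grothendieck equivalent to $PSU(2)_{4k+2}$ but is literally of the form $\mcC(\mathfrak{psl}_2,q^t,4k+4)$, i.e.\ a Galois conjugate. One then extends the Galois automorphism $\sigma$ from $\mcC_0$ to all of $\mcC$; the conjugate $\sigma(\mcC)$ is a minimal modular extension of $PSU(2)_{4k+2}$ itself, hence one of the $16$ listed in \cite{16fold}, and since Galois conjugation preserves fusion rules, $\mcC$ is Grothendieck equivalent to it. This also renders your anticipated ``main obstacle'' (the rank bookkeeping for $|\mcC_\sigma|$) unnecessary: once $\mcC$ is identified as a Galois conjugate of one of the explicit extensions in \cite{16fold}, its rank is read off directly from that list, and you need not argue separately which values of $|\mcC|$ occur.
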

\begin{proof}
If $K_0(\mcC_0)\not\cong K_0(PSU(2)_{4k+2})$ for $k=1,2$ then we have seen that $\mcC_0$ is split super-modular, i.e. of the form $\mcC_0\cong\sVec\boxtimes\mcD$ for some modular category $\mcD$.  Since there are exactly $16$ (or $0$) minimal modular extensions of any super-modular category and the extensions $\mcD\boxtimes SO(N)_1$ for $1\leq N\leq 16$ are minimal and distinct, we have proved (a).

If $\mcC_0$ is Galois conjugate to $PSU(2)_{4k+2}$ for $k=1,2$ then we extend the Galois automorphism $\sigma$ to $\mcC$  which changes $\mcC_0$ to $PSU(2)_{4k+2}$, apply the classification of \cite[Section III.G]{16fold}.
\end{proof}

Notice that if $|\mcC|=12$ then $|\mcC_0|=6$ or $8$, and in these cases we have  $(|\mcC_v|,|\mcC_\sigma|)=(6,0)$ or $(0,4)$ respectively.  Of course we may construct many such spin modular categories as Deligne products of $SO(N)_1$ with modular categories of rank $4$ (for $N$ odd) or rank $3$ (for $N$ even), but presumably there are others.  In fact, for $|\mcC_0|=6$ and $|\mcC|=12$ our classification implies that $\mcC_0$ must be split super-modular, hence $\mcC\cong\mcD\boxtimes SO(N)_1$, as a minimal modular extension of a split super-modular category.  
From the evidence we have seen so far (i.e. up to rank $6$) the following may be true:
\begin{conjecture}
    If $\mcB$ is super-modular with $S$-matrix
    $\hat{S}\otimes\begin{pmatrix}1&1\\1&1\end{pmatrix}$ with
    $\hat{S}$ the $S$-matrix of some modular category, $\mcD$, then
    $\mcB$ is split super-modular.
  \end{conjecture}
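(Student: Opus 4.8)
The plan is to reduce the statement to a question about lifting a fusion ring, and then attack that using the ribbon structure of $\mcB$. First I would pin down the fusion rules: by Proposition~\ref{mock S-matrix properties}(d) the naive fusion rules $\hat{\mcN}$ of the fermionic quotient of $\mcB$ are given by the Verlinde formula applied to $\hat S$, and since by hypothesis $\hat S$ is the (unnormalized) $S$-matrix of a modular category $\mcD$ with $\dim\mcD=\dim(\mcB)/2$, this formula is exactly Verlinde's formula for $\mcD$. Hence $\hat{\mcN}\cong K_0(\mcD)$ as fusion rings, i.e.\ the fermionic quotient of $\mcB$ is Grothendieck equivalent to $\mcD$.

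Next I would reduce ``$\mcB$ is split'' to a grading statement. By Theorem~\ref{graded_splits} it suffices to exhibit a $\mbZ_2$-grading $\mcB=\mcB_0\op\mcB_1$ with $f\in\mcB_1$. Using $\FPdim(fX)=\FPdim(X)$ and the fact that $-\otimes f$ is an autoequivalence, this amounts to choosing representatives $\Pi_0\ni 0$ of the $\langle f\rangle$-orbits on $\Pi$, closed under duality, such that $\Pi_0$ is closed under fusion. Such a $\Pi_0$ generates a fusion subcategory $\mcB_0$ of rank $r+1$ with $\FPdim(\mcB_0)=\dim(\mcB)/2=\FPdim(\mcB)/\FPdim(\mcB')$, and $\mcB=\mcB_0\op f\mcB_0$ is then a faithful $\mbZ_2$-grading with $f$ in the nontrivial component; Theorem~\ref{graded_splits} (or, once $\mcB_0$ is known to be modular, Lemma~\ref{lemma: factorization}) finishes. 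So the whole assertion is equivalent to: the $\mbZ_2$-central extension $K_0(\mcB)$ of the modular fusion ring $\hat{\mcN}\cong K_0(\mcD)$ determined by the transparent fermion $f$ admits a based-ring splitting, up to the relabelling ambiguity $X\leftrightarrow fX$.

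The hard part is constructing $\Pi_0$, and here the fusion ring alone is not enough: there are non-split $\mbZ_2$-central extensions of modular fusion rings (for instance $\mbZ_{2n}$ as an extension of the pointed modular fusion ring $\mbZ_n$ with $n$ even is non-split as a based ring), so modularity of $\hat{\mcN}$ must be combined with the braiding and twists of $\mcB$. The extra input I would use is that $\theta_{fX}=-\theta_X$ while $\theta_{X^*}=\theta_X$, so $fX\not\cong X^*$ for every simple $X$ (this already kills the $\mbZ_{2n}$ extension just mentioned, which shows the ribbon data is essential), together with the balancing/ribbon identity: for $i,j\in\Pi_0$ it expresses $\hat S_{ij}=\tS_{ij}$ in terms of the $\theta_k$ and $N_{ij}^k-N_{ij}^{fk}$ for $k\in\Pi_0$, and this must be reconciled with $\hat S$ being the $S$-matrix of $\mcD$. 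The goal would be to show, using the congruence-subgroup/Galois rigidity of the modular data of $\mcD$, that the only solutions have $N_{ij}^{fk}=0$ after relabelling, i.e.\ that for a suitable $\Pi_0$ the twists $\theta|_{\Pi_0}$ assemble into a genuine $T$-matrix for the subcategory $\mcB_0$. An alternative route, conditional on the minimal modular extension Conjecture~\ref{GFPC}, is to pass to a minimal modular extension $\mcC\supset\mcB$ with $\dim\mcC=2\dim\mcB$ and $\mcC_0=\mcB$, show that $\mcD$ is realized as a modular subcategory of $\mcC$ (the $\Pi_0\times\Pi_0$ block of $S_\mcC$ being $\tfrac12\hat S$, which is non-degenerate), and then apply M\"uger's decomposition $\mcC\cong\mcD\boxtimes C_\mcC(\mcD)$ with $C_\mcC(\mcD)$ a spin modular category of dimension $4$; since the super-modular part of such a category is $\sVec$, this gives $\mcB=\mcC_0\cong\mcD\boxtimes\sVec$.

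I expect this third step to be the only genuine obstacle. The reductions above are routine, and the known non-split family $PSU(2)_{4k+2}$ is excluded because its quotient fusion ring fails to be modular; but separating the ``$\mcD$-part'' of $\mcB$ from the fermion seems to require either a delicate use of the twists and the arithmetic of modular data, or the minimal modular extension conjecture, which is why the statement is recorded here as a conjecture rather than proved.
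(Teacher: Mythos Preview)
The statement you are attempting to prove is recorded in the paper as a \emph{conjecture}: the authors explicitly preface it with ``From the evidence we have seen so far (i.e.\ up to rank $6$) the following may be true,'' and offer no argument beyond the low-rank classification. There is therefore no proof in the paper to compare your proposal against, and your closing sentence---that the difficulty in the third step ``is why the statement is recorded here as a conjecture rather than proved''---is exactly right.

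That said, your reductions are sound and worth recording. Step~1 is correct: Proposition~\ref{mock S-matrix properties}(d) together with the hypothesis that $\hat S$ is the $S$-matrix of a modular category $\mcD$ forces $\hat{\mcN}\cong K_0(\mcD)$. Step~2 is also correct: a choice of $\Pi_0$ closed under fusion and duality gives a faithful $\mbZ_2$-grading with $f\in\mcB_1$, and Theorem~\ref{graded_splits} then yields the splitting. Your observation that the problem is therefore a based-ring splitting problem for the $\mbZ_2$-central extension $K_0(\mcB)$ of $K_0(\mcD)$, constrained by the ribbon data via $\theta_{fX}=-\theta_X$ and the balancing identity, is the right formulation.

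Two cautions about Step~3. First, your $\mbZ_{2n}/\mbZ_n$ example is apt as a warning that the based ring alone cannot decide the question, but note that it does not actually produce a super-modular counterexample: for $n=2$, say, the constraint $q(-a)=q(a)$ on a quadratic form on $\mbZ_4$ forces $q(3)=q(1)$, which is incompatible with the transparency condition $q(a+2)=-q(a)$ needed for $2$ to be a transparent fermion. So the ribbon obstruction you anticipate is already visible here. Second, your alternative route through a minimal modular extension $\mcC\supset\mcB$ does not sidestep the difficulty: the assertion ``$\mcD$ is realized as a modular subcategory of $\mcC$'' is precisely the statement that some $\Pi_0$ is fusion-closed, since the $\Pi_0\times\Pi_0$ block of $S_\mcC$ being non-degenerate does not by itself force $\Pi_0$ to span a fusion subring. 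Once that embedding is granted, your M\"uger-decomposition argument is fine, but the embedding is the whole problem.

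In short: your outline matches the natural strategy, you have correctly isolated the genuine obstruction, and the paper does not go further than you do.
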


\bibliographystyle{plain}

\end{document}